\documentclass[11pt]{amsart}
\NeedsTeXFormat{LaTeX2e}
\usepackage{mathrsfs}
\usepackage{amssymb}
\usepackage{comment}
\usepackage{graphicx}
\usepackage{hyperref}
\usepackage{palatino}
\usepackage{tikz-cd}

\numberwithin{equation}{section}
\theoremstyle{plain}

\newtheorem{Corollary}[equation]{Corollary}
\newtheorem*{Corollary*}{Corollary}
\newtheorem{Theorem}[equation]{Theorem}
\newtheorem*{Theorem*}{Theorem}
\newtheorem{Lemma}[equation]{Lemma}
\theoremstyle{definition}

\newtheorem{Example}[equation]{Example}
\newtheorem{Remark}[equation]{Remark}

\usepackage{mathtools}
\mathtoolsset{showonlyrefs}

%\numberwithin{equation}{section}
%\theoremstyle{plain}
%\newtheorem{Proposition}{Proposition}[section]
%\newtheorem{Corollary}[Proposition]{Corollary}
%\newtheorem*{Corollary*}{Corollary}
%\newtheorem{Theorem}[Proposition]{Theorem}
%\newtheorem*{Theorem*}{Theorem}
%\newtheorem{Lemma}[Proposition]{Lemma}
%\theoremstyle{definition}
%\newtheorem{Definition}[equation]{Definition}
%\newtheorem{Conjecture}[equation]{Conjecture}
%\newtheorem{Example}[Proposition]{Example}
%\newtheorem{Remark}[Proposition]{Remark}
%\newtheorem{Question}[equation]{Question}

\allowdisplaybreaks

\usepackage{array}

\newcommand\undermat[2]{%
	\makebox[0pt][l]{$\smash{\underbrace{\phantom{%
					\begin{matrix}#2\end{matrix}}}_{\text{$#1$}}}$}#2}

%Include your preferred graphics and mathematics packages here,
%using the command \usepackage{}

%The \newtheorem command is used to define theorem-like environments
%that normally REQUIRE A PROOF, for example:
%\newtheorem{Theorem}{Theorem}[section] % 1st argument is your name for it
%\newtheorem{Lemma}[Theorem]{Lemma}     % 2nd argument is what is printed
%\newtheorem{Corollary}[Theorem]{Corollary}
%\newtheorem{Proposition}[Theorem]{Proposition}
%\newtheorem{Remark}[Theorem]{Remark}
%To control the numbering sequence of these environments, see
%Lamport's book on LaTeX [2, p. 193].

%\newtheorem{Question}[Theorem]{Question}

%\renewcommand{\theopen}{Open Question}

\def\C{\mathbb{C}}

\def\D{\mathbb{D}}

\def\N{\mathbb{N}}

\def\phi{\varphi}

\newcommand{\beqa}{\begin{eqnarray*}}
\newcommand{\eeqa}{\end{eqnarray*}}
\newcommand{\dst}{\displaystyle}

\newcommand{\eps}{\varepsilon}

\renewcommand{\le}{\leqslant}
\renewcommand{\leq}{\leqslant}
\renewcommand{\ge}{\geqslant}
\renewcommand{\geq}{\geqslant}
\renewcommand{\subset}{\subseteq}

\linespread{1.1}
\title[Estimating the solutions of B\'ezout's   identity]{An analytic approach to estimating the solutions of  B\'ezout's polynomial  identity}

\author[Fricain]{Emmanuel Fricain}
 \address{Laboratoire Paul Painlev\'e, Universit\'e de Lille, 59 655 Villeneuve d'Ascq C\'edex }
 \email{emmanuel.fricain@univ-lille.fr}

\author[Hartmann]{Andreas Hartmann}
\address{Univ. Bordeaux, CNRS, Bordeaux INP, IMB, UMR 5251, F-33400, Talence, France}
\email{Andreas.Hartmann@math.u-bordeaux.fr}

\author[Ross]{William T. Ross}
	\address{Department of Mathematics and Statistics, University of Richmond, Richmond, VA 23173, USA}
	\email{wross@richmond.edu}
	
		\author[Timotin]{Dan Timotin}
	\address{Simion Stoilow Institute of Mathematics of the Romanian Academy, Calea Grivi\c tei 21, Bucharest 010702, Romania}
	\email{Dan.Timotin@imar.ro}
 
\begin{document}

\begin{abstract}
This paper contains sharp bounds on the coefficients of the polynomials $R$ and $S$ which solve the classical one variable B\'{e}zout identity $A R + B S = 1$, where $A$ and $B$ are polynomials with no common zeros. 
The bounds are expressed in terms of the separation of the zeros of $A$ and $B$.
Our proof involves contour integral representations of these coefficients. We also obtain an estimate on the norm of the inverse of the Sylvester matrix.
\end{abstract}
\maketitle

\section{Introduction}

The well known B\'{e}zout polynomial identity \cite{Bez} says that if $A(z)$ and $B(z)$ are complex polynomials with no common zeros, of degrees $N$ and $K$ respectively, then there are  complex  polynomials $ R(z) $ and $ S(z) $ with 
\begin{equation}\label{defgonx}
 \deg R\leq K-1 \; \; \mbox{and} \; \; \deg S\le N-1
 \end{equation} such that 
\begin{equation}\label{Bezout_C}
	A(z) R(z) + B(z) S(z) = 1 \; \mbox{for all $z \in \C$.}
\end{equation}
Moreover, if $ A , B$ are not both constant, then $R$ and $S $ are uniquely determined by the degree condition from \eqref{defgonx} and are called the \emph{minimal solutions} of~\eqref{Bezout_C}.
To avoid trivialities, we will assume for the rest of the paper that both the polynomials $A$ and $B$ are not constant.

 Our main result  (Theorem \ref{th:main} ) provides sharp (upper) estimates of the  coefficients of $R$ and $S$ in terms of the constant 
$$\delta(A,B):=\min\{ |A(z)|+|B(z)|: A(z)=0\text{ or }B(z)=0\}.$$
Namely, we prove there is a constant $C > 0$, depending only on $N$ and $K$, such that if the moduli of the coefficients of $A$ and $B$ are bounded by 1, then the moduli of the coefficients of $R$ and $S$ are bounded by $C\delta^{-2}$.

Computing the coefficients of $R$ and $S$ traditionally involves
 algebraic methods and are not convenient for obtaining sharp estimates. Our approach uses analytical tools based on 
 contour integrals similar to those in~\cite{Yger} (see also \cite{YgerNew}).  In Theorem \ref{th:estimates on Sylvester} we apply our methods to obtain an upper estimate of the norm of the  inverse of the  Sylvester matrix. 

Quite surprisingly, we could not find any estimates of coefficients of the minimal solutions $R$ and $S$ from \eqref{Bezout_C}  in the literature. 
There are estimates of the degree and size of the solutions of the much harder analogous problem in several variables, which is related to Hilbert's Nullstellensatz (see, for instance,~\cite{BY} and the references therein). However, they do not involve quantities similar to $ \delta(A,B)$ and thus are not connected  to Theorem \ref{th:main}.

One can regard B\'ezout's polynomial  identity as an algebraic version of the well-known corona theorem of Carleson \cite{MR141789}: if $\phi, \psi\in H^\infty$, the bounded analytic functions on the open unit disk $\D$, satisfy 
\begin{equation}\label{cosodgsertggdsa} 
	1\ge
|\phi(z)|+|\psi(z)|\ge \delta>0 \; \mbox{for all $z\in\D$},
\end{equation} then there are $g,h\in H^\infty$ such that 
\begin{equation}\label{zppzpZPPZPPZ}
\phi (z) g(z)+\psi(z) h(z)  = 1\; \mbox{for all $z \in \D$}.
\end{equation} 
%Notice that complex polynomials $A$ and $B$  have no common zeros if and only if there is a $\delta > 0$ such that 
%$$|A(z)| + |B(z)| \geq \delta \; \mbox{for all $z \in \C$}.$$
Moreover,  results from \cite{MR570865, MR629839,Uch} show there is a universal constant $C > 0$ such that one can choose the $g,h \in H^{\infty}$ satisfying \eqref{zppzpZPPZPPZ} so that 
\begin{equation}\label{88u88799II}
\| g \|_{\infty}, \| h \|_{\infty}\le \frac{C}{\delta^2} \log \frac{1}{\delta},
\end{equation}
where $\|g\|_{\infty} := \sup\{|g(z)|: z \in \D\}$ and $\delta$ is defined in \eqref{cosodgsertggdsa}. 
A result from \cite{MR1945294} shows that the estimate in \eqref{88u88799II} is almost sharp. 
In the case of B\'{e}zout's identity, it follows from our main theorem (Theorem \ref{th:main})  that the logarithm term is not required.

A final remark: the constants $C_1, C_2, \dots$ that will appear in the estimates in this paper {\em only} depend on $N = \operatorname{deg} A$ and $K = \operatorname{deg} B$ and {\em not} on the coefficients of the polynomials $A$ and $B$. This fact will be in force in the sequel, even if not always explicitly stated.

\section{Preliminaries and the main result}\label{se:prelim}

Let $\C[z]$ denote the polynomials in the complex variable $z$ with coefficients in $\C$. For $N \in   \{1, 2, \ldots\}$, let $\C_{N}[z]$ denote the vector space of polynomials whose degree is at most $N$. 
We need a way to measure the ``size'' of an $A \in \C[z]$.
For this we use  the maximum of the moduli of the  coefficients of $A$ in that if 
$$A(z)=a_0+a_1z+\cdots+ a_Nz^N,$$ then
\[
\| A \|:=\max_{0\le i\le N}|a_i|
\]
defines a norm on $\C[z]$.

In  order to quantify the fact that $A \in \C_{N}[z]$ and $B \in \C_{K}[z]$  share no common roots, we define
\begin{equation}\label{eq:other delta}
	\delta(A,B):=\min\{ |A(z)|+|B(z)|: A(z)=0\text{ or }B(z)=0\}.
\end{equation}
We may reformulate~\eqref{eq:other delta} by
\begin{equation}\label{eq:min values A B}
	\delta=\delta(A,B):=\min\{ |A(\beta_j)|, |B(\alpha_i)|, 1 \leq i \leq N, 1 \leq j \leq K\},
\end{equation}
where $\{\alpha_i\}_{i = 1}^{N}$ are the roots of $A$ and $\{\beta_j\}_{j = 1}^{K}$ are the roots of $B$.

Another  way  to quantitatively express  the lack of common roots for $A$ and $B$  is to consider
\begin{equation}\label{eq:tilde delta}
	\widetilde{\delta}(A,B):=\min\{|A(z)|+|B(z)|: z \in \C\},
\end{equation}
which appears, for instance, in~\cite{MR3741670}.
Being the infimum of a family of continuous functions, $\widetilde\delta$ is upper semicontinous (but not necessarily continuous, see Example \ref{example-DD}), on the compact set
\begin{equation}\label{eq:product ball}
	\mathfrak{B}:=\{(A,B)\in \C_N[z]\times\C_K[z]: \max\{\| A \|,\|  B \|\}\le 1\},
\end{equation}
whence it is also bounded on $\mathfrak{B}$.
 Though $\delta(A,B)$ is used in this paper, Corollary~\ref{co:equivalence of deltas}
shows that $\delta(A, B)$ and $\widetilde{\delta}(A, B)$ are equivalent.

One way to obtain the unique minimal solutions $R$ and $S$ to \eqref{Bezout_C}  (see Section~\ref{se:Sylvester} for the details), is to show that their coefficients  are the solutions of a linear system involving a matrix called the {\em Sylvester matrix}. More precisely, if 
$$A(z) = a_0 + a_1 z + \cdots + a_{N} z^{N} , \quad B(z) = b_0 + b_1 z + \cdots + b_{K} z^{K},$$ and 
$$ R(z)=r_0+\dots+r_{K - 1} z^{K - 1} , \quad S(z)=s_0+\dots+ s_{N - 1} z^{N - 1},$$
one can compare the  coefficients of each side of \eqref{Bezout_C}, after carrying  out the algebraic manipulations, to create a linear system of equations in the variables  $\{r_i\}_{i= 0}^{K - 1}$ and $\{s_j\}_{j = 0}^{N - 1}$. Applying Cramer's rule to this system, 
one obtains 
 that each solution $r_i$ and $s_j$ is a quotient of two determinants of order $N+K$ whose nonzero entries are either 1 or one of the coefficients of $A$ or $B$.
 Therefore,
\begin{equation}\label{eq:sylv formulas}
r_i=\frac{\widetilde r_i}{\Delta(A,B)} ,\quad s_j=\frac{\widetilde s_j}{\Delta(A,B)},
\end{equation}
where $ \widetilde{r}_i, \widetilde s_j $ and $\Delta(A,B)$ are fixed polynomials, of degree   $ N+K $ and with at most $ (N+K)! $ terms, in the  coefficients  $\{a_j\}_{j = 0}^{N}$ and $\{b_j\}_{j = 0}^{K}$ of $ A $ and $ B $ respectively. An easy consequence of these formulas is the following lemma.

\begin{Lemma}\label{le:sylvester}
	For fixed $N$ and $K$,  let $\C_{N}^{*}[z]$ and $\C^{*}_{K}[z]$ denote the nonconstant polynomials in $\C_{N}[z]$ and $\C_{K}[z]$ respectively. Define
	\begin{equation}\label{TTTTTTttt}
	\mathcal{T} = \{(A, B) \in \C_{N}^{*}[z] \times \C_{K}^{*}[z]: \Delta(A, B) \not = 0\}
	\end{equation} and let 
	$\phi:\mathcal{T}  \mapsto \C_{K - 1}[z] \times \C_{N - 1}[z]$ be defined by $\phi(A, B) = (R, S)$, where $R$ and $S$ are the minimal solutions to \eqref{Bezout_C}. Then $\phi$ is a continuous map. 
\end{Lemma}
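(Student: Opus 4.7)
The plan is to read off the continuity directly from the Cramer-rule formulas~\eqref{eq:sylv formulas}, since the lemma is essentially just unpacking what those formulas say at the topological level. First, I would fix the identification $A = \sum_{k=0}^{N} a_k z^k \mapsto (a_0, \dots, a_N) \in \C^{N+1}$ (and analogously for $B$), which is a linear isomorphism of $\C_N[z]$ onto $\C^{N+1}$ and is bicontinuous for the norm $\|\cdot\|$ defined in Section~\ref{se:prelim}. Under this identification, $\C_N^*[z] \times \C_K^*[z]$ becomes an open subset of $\C^{N+K+2}$, and continuity of a map in $(A,B)$ is the same as continuity in the coefficient tuple.

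Next, I would invoke~\eqref{eq:sylv formulas} to write each coefficient of $R$ and $S$ as
\[
r_i(A,B) = \frac{\widetilde{r}_i(A,B)}{\Delta(A,B)}, \qquad s_j(A,B) = \frac{\widetilde{s}_j(A,B)}{\Delta(A,B)}.
\]
By construction each of $\widetilde{r}_i$, $\widetilde{s}_j$, and $\Delta$ is a determinant of a matrix of order $N+K$ whose entries are $0$, $1$, or one of the coefficients $a_0,\dots,a_N,b_0,\dots,b_K$. Expanding such a determinant shows that $\widetilde{r}_i$, $\widetilde{s}_j$, and $\Delta$ are \emph{polynomial} functions of $(a_0,\dots,a_N,b_0,\dots,b_K)$, and in particular continuous on $\C^{N+K+2}$.

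On the set $\mathcal{T}$ defined in~\eqref{TTTTTTttt} the denominator $\Delta(A,B)$ does not vanish, so each quotient $r_i$, $s_j$ is continuous on $\mathcal{T}$ as a ratio of two continuous scalar functions with nonvanishing denominator. Assembling these coordinates, the map
\[
\phi(A,B) = \bigl(r_0 + r_1 z + \dots + r_{K-1} z^{K-1},\ s_0 + s_1 z + \dots + s_{N-1} z^{N-1}\bigr)
\]
into $\C_{K-1}[z] \times \C_{N-1}[z]$ (with its coefficient norm) is continuous. There is no real obstacle here; the only point requiring any care is being explicit that the identification between polynomials and their coefficient vectors is a homeomorphism, so that polynomial dependence of the coefficients of $(R,S)$ on those of $(A,B)$ is the same as continuity of $\phi$ in the sense claimed.
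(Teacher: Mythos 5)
Your argument is correct and is essentially the paper's own: the lemma is stated there as ``an easy consequence'' of the Cramer-rule formulas~\eqref{eq:sylv formulas}, i.e.\ the coefficients of $R$ and $S$ are quotients of polynomial functions of the coefficients of $A$ and $B$ with the nonvanishing denominator $\Delta(A,B)$ on $\mathcal{T}$, exactly as you spell out. Your only addition is making explicit the (harmless) identification of $\C_N[z]$ with coefficient space, which the paper leaves implicit.
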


The quantity $\Delta(A,B) $ in \eqref{eq:sylv formulas} is the determinant of the matrix of the system and is called the {\em Sylvester resultant}. One can show (see, for instance, \cite[IV, $\S8$]{MR1878556}) that
\begin{align}\label{eq:sylv}
|\Delta(A,B)|& =\Big|a_{N}^{K} b_{K}^{N} \prod_{i = 1}^{N} \prod_{j = 1}^{K} (\alpha_i - \beta_{j})\Big|\nonumber\\
& =|b_K|^N\prod_{j=1}^{K}|A(\beta_j)|
=|a_N|^K \prod_{i=1}^{N} |B(\alpha_i)|,
\end{align}
whence the set $\mathcal{T}$ from \eqref{TTTTTTttt} is actually the set of pairs of nonconstant polynomials without common roots.

Since $ \widetilde{r}_i, \widetilde s_j $ are fixed polynomials of degree at most $ N+K $ in the coefficients of $ A $ and $ B $, the conditions $\| A \|, \| B \|\le 1$ imply that $|\widetilde{r}_i|$ and $|\widetilde{s}_j|$ are bounded above  by some positive constant depending only on $N$ and $K$.  From~\eqref{eq:sylv formulas} and~\eqref{eq:sylv},
 there is a constant $C > 0$, depending only  on $ N $ and $ K $, such that if $ \| A \|, \| B \|\le 1 $ and $ |b_K|=|a_N|=1 $ (i.e., $A$ and $B$ are monic polynomials), then 
\begin{equation}\label{eq:crude estimate RS}
 \|R\|,\|S\|\le \frac{C}{\delta^{\min(N,K)}}.
\end{equation}

Our main result below, which will be proved in Section~\ref{se:main}, is a significant improvement of~\eqref{eq:crude estimate RS} in that one can replace the exponent $ \min(N,K) $ by~2, as well as ignore the assumption that $A$ and $B$ are monic polynomials.  

\begin{Theorem}\label{th:main}
	Let $A, B \in \C[z]$, with $\operatorname{deg} A = N$ and $\operatorname{deg} B = K$, satisfy $ \| A \|, \| B \|\le 1 $.
	If $\delta = \delta(A,B)> 0$, then the unique minimal solutions
	$R \in \C_{K - 1}[z]$ and $S \in \C_{N - 1}[z]$ to B\'{e}zout's identity \eqref{Bezout_C} satisfy 
	\begin{equation}\label{eq:estimate RS}
	\|R\|,\|S\|\le \frac{C}{\delta^2},
	\end{equation}
	for some universal constant $C > 0$  depending only on $N$ and $K$.
\end{Theorem}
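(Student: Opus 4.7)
The plan is to write each coefficient $r_i$ of $R$ as a contour integral and then choose the contour so that the denominator $|A(w)B(w)|$ is bounded below by a constant multiple of $\delta^2$. By Lemma~\ref{le:sylvester} the map $(A,B)\mapsto(R,S)$ is continuous on $\mathcal{T}$, and the pairs $(A,B)\in\mathcal{T}\cap\mathfrak{B}$ for which both $A$ and $B$ have only simple zeros form a dense open subset. Since $\delta(A,B)$ depends continuously on $(A,B)$ on $\mathcal{T}$ (roots depend continuously on coefficients), it suffices to prove~\eqref{eq:estimate RS} under the additional hypothesis that $A$ and $B$ have only simple zeros, and then to pass to the limit. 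Under this simplicity assumption, evaluating~\eqref{Bezout_C} at the zeros of $B$ gives $R(\beta_j)=1/A(\beta_j)$, so that $R$ is the Lagrange interpolant of $1/A$ at $\beta_1,\dots,\beta_K$, yielding
\[
R(z)=\sum_{j=1}^{K}\frac{B(z)}{A(\beta_j)\,B'(\beta_j)\,(z-\beta_j)}.
\]

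Each summand is the residue at $w=\beta_j$ of $B(z)/\bigl[A(w)B(w)(z-w)\bigr]$, so for any cycle $\Gamma_B$ of winding number one around each $\beta_j$ and zero around every $\alpha_k$ and around $z$ one has $R(z)=\frac{1}{2\pi i}\oint_{\Gamma_B}B(z)\,dw/\bigl[A(w)B(w)(z-w)\bigr]$. Writing $B(z)/(z-w)=Q(z,w)+B(w)/(z-w)$ with $Q(z,w):=\bigl(B(z)-B(w)\bigr)/(z-w)\in\C[z,w]$, the contribution of $B(w)/(z-w)$ to the integrand simplifies to $1/\bigl[A(w)(z-w)\bigr]$, which is holomorphic in $w$ inside $\Gamma_B$ and therefore integrates to zero. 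This gives
\[
R(z)=\frac{1}{2\pi i}\oint_{\Gamma_B}\frac{Q(z,w)}{A(w)B(w)}\,dw,\qquad r_i=\frac{1}{2\pi i}\oint_{\Gamma_B}\frac{Q_i(w)}{A(w)B(w)}\,dw,
\]
where $Q_i(w)=\sum_{m=0}^{K-1-i}b_{i+1+m}w^{m}$. The natural choice of contour is now $\Gamma_B=\partial\{w\in\C:|B(w)|\le c\delta\}$ for a small $c=c(N,K)$, so that $|B(w)|=c\delta$ on $\Gamma_B$ by construction. If $c$ is small enough, each component of the sub-level set is small enough to contain no zero of $A$, and a short continuity/compactness argument based on $|A(\beta_j)|\ge\delta$ and $\|A\|\le 1$ forces $|A(w)|\ge\delta/2$ on $\Gamma_B$. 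Hence $|A(w)B(w)|\ge(c/2)\delta^{2}$ there, and bounding the length of $\Gamma_B$ and $\sup_{w\in\Gamma_B}|Q_i(w)|$ by constants depending only on $N,K$ produces $|r_i|\le C/\delta^{2}$. The bound on $\|S\|$ follows by an entirely symmetric argument with the roles of $A$ and $B$ interchanged.

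The main obstacle I anticipate is the uniform geometric control of $\Gamma_B=\partial\{|B|\le c\delta\}$ across all $B\in\C_K[z]$ with $\|B\|\le 1$. A Cartan-type lemma describes the sub-level set as a union of disks whose total radius scales as $(c\delta/|b_K|)^{1/K}$, but when $|b_K|$ is small the zeros of $B$ drift to infinity and $\sup_{\Gamma_B}|Q_i(w)|$ grows accordingly. Reconciling these two effects requires exploiting the normalization $\|B\|\le 1$ globally, not merely the trivial bound $|b_K|\le 1$: the smallness of the leading coefficient and the spread of the zeros of $B$ are tightly coupled, and it is this coupling which keeps the final bound universal in $N$ and $K$. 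Once the geometry of the sub-level set is under control, the final inequality $\|R\|\le C/\delta^{2}$ reduces to a direct $ML$-estimate of the contour integral for $r_i$.
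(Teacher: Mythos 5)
Your reduction to simple zeros and your contour-integral representation of the coefficients are sound and coincide with the paper's machinery (this is Corollary~\ref{co:Bezout extended} with $P=1$, i.e.\ \eqref{eq:solutions for arbitrary monomial} with $t=0$); a small caveat is that $\delta$ is \emph{not} continuous on $\mathcal{T}$ (see Example~\ref{example-DD}), so the limiting argument must be done within the stratum of exact degrees $N,K$, as in Lemma~\ref{le:reduction}. Also, the bound $|A(w)|\ge\delta/2$ on $\Gamma_B$ does not follow from a ``short continuity/compactness argument'': the components of $\{|B|\le c\delta\}$ need not be small (they are large exactly when the zeros of $B$ are spread out), and what is true is a bound of the form $|A|\ge\delta/3^N$ on $\{|B|<\delta/3^K\}$, which is the content of the paper's separation Lemma~\ref{le:separation}, proved by a genuinely combinatorial cycle argument, not by compactness.

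The decisive gap, however, is the one you yourself flag and leave unresolved: the claim that $\operatorname{len}(\Gamma_B)$ and $\sup_{w\in\Gamma_B}|Q_i(w)|$ are bounded by constants depending only on $N$ and $K$. This is false in general: with $\|B\|\le 1$ the leading coefficient $b_K$ can be arbitrarily small, the zeros of $B$ (and hence the contour) drift to infinity, and $|Q_i(w)|\sim|w|^{K-1-i}$ blows up; with your estimates one only recovers the paper's weak form (Theorem~\ref{co:main theorem weak}, with a constant depending on a bound $M$ for the roots), not Theorem~\ref{th:main}. The paper closes exactly this gap by a different device, absent from your proposal: it first translates so that a disk $\overline{D(0,2\eps)}$ contains no zero of $A$ or $B$, then passes to the reversed polynomials $\widetilde A(z)=z^NA(1/z)$, $\widetilde B(z)=z^KB(1/z)$ and solves $\widetilde A\widetilde R+\widetilde B\widetilde S=z^{N+K-1}$, so that after the substitution $u=1/\zeta$ the integrals live on the contour $\Gamma_1=\partial(E_A\cap D_A)$ with $D_A=\bigcup_i D(\alpha_i,\tfrac34|\alpha_i|)$, and the only geometric quantity needed is the \emph{scale-invariant} bound $\int_{\Gamma_1}|du|/|u|\le C_6$ of Lemma~\ref{le:estimates on Gamma1}, together with $|u|\ge\eps/2$ on $\Gamma_1$. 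It is this inversion trick, combined with the dichotomy $m=\min\{C_5\|A\|/2^N,\delta/3^N\}$ and the boundedness \eqref{eq:boundedness of delta} of $\delta$, that removes all dependence on the size of the roots. Without an argument of this kind (or some other way to exploit the coupling between $|b_K|$ and the spread of the zeros, which you name but do not supply), your proof does not establish the theorem.
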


In Section \ref{tennn} we address the issue as to what happens when we remove the assumption that  $ \| A \|, \| B \|\le 1 $.

%A combination of   Lemma~\ref{le:sylvester} and Lemma \ref{le:reduction} has an important consequence related to Theorem~\ref{th:main}. If we prove the estimate in \eqref{eq:estimate RS} for all $A, B \in \C[z]$ with {\em distinct }roots and a uniform constant $C$ (only depending on $N$ and $K$ and not on the coefficients of  $A$ and $B$), then we may extend the estimate to all polynomials. 

As noted above, we will obtain $R$ and $S$ using complex function theory. A similar method was used to explore polynomials of several variables in connection with B\'ezout identities and Hilbert's Nullstellensatz \cite{MR1249478}.  

%%%%%%%%%%
%%%%%%%%%%

%\section{Some simple geometrical remarks}

\section{A separation lemma}

This section contains a separation lemma that will have several interesting consequences. 
Recall that 
$$A(z)=\sum_{k=0}^{N}a_kz^k \; \;  \mbox{and} \; \;  B(z)=\sum_{k=0}^{K}b_kz^k$$  
are two complex polynomials. 
If $\{\alpha_i\}_{i = 1}^{N}$ are the roots of $A$ and $\{\beta_{j}\}_{j = 1}^{K}$ are the roots of $B$, then we can write 
\begin{equation}\label{ANBBB}
A(z) = a_{N} \prod_{i = 1}^{N} (z - \alpha_i) \; \;  \mbox{and} \; \;  B(z) = b_{K} \prod_{j = 1}^{K} (z - \beta_j).
\end{equation}
For $\epsilon > 0$ let
\begin{equation}\label{LLLLLL}
L(A, \epsilon):=\{z\in\C:|A(z)|<\epsilon\},
\end{equation}
 and similarly defined for $B$, denote a sub-level set for $A$ (respectively $B$). If $A$ and $B$ share no common zeros, then certainly $L(A, \epsilon_A) \cap L(B, \epsilon_B) = \varnothing$ for small enough $\epsilon_A, \epsilon_{B}$. The next lemma, which plays a crucial role in the proof of our main theorem, shows that one can choose $\epsilon_A, \epsilon_B$ to only depend on $\delta$, $N$, and $K$ (and not on the coefficients of $A$ and $B$).

\begin{Lemma}\label{le:separation}
	If $\|A\|, \| B \|\le 1$ and $\delta = \delta(A, B) > 0$, then 
	\begin{equation}\label{eq:separation formula}
			L\big(A, \frac{\delta}{3^N}\big)\bigcap L\big(B, \frac{\delta}{3^K}\big)=\varnothing.
	\end{equation}
 
\end{Lemma}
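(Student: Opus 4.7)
The plan is to argue by contradiction: suppose there exists $z_0 \in L(A,\delta/3^N) \cap L(B,\delta/3^K)$. First I would dispose of the degenerate cases. If $A(z_0) = 0$, then $z_0$ coincides with one of the roots $\alpha_i$, so by the reformulation \eqref{eq:min values A B} we have $|B(z_0)| \geq \delta > \delta/3^K$, contradicting $z_0 \in L(B,\delta/3^K)$; the symmetric argument rules out $B(z_0)=0$. Thus $z_0$ is distinct from every root of $A$ and every root of $B$, so every quantity $|z_0-\alpha_i|$ and $|z_0-\beta_j|$ is strictly positive.

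The heart of the argument is the following \emph{halving observation}: for any root $\alpha$ of $A$ there is a root $\beta_j$ of $B$ with $|z_0-\beta_j| < \tfrac{1}{2}|z_0-\alpha|$, and symmetrically with the roles of $A$ and $B$ swapped. Using the factorization \eqref{ANBBB} and $|B(\alpha)|\ge\delta$ (from \eqref{eq:min values A B}), I would write
\[
\prod_{j=1}^{K} \frac{|z_0-\beta_j|}{|\alpha-\beta_j|} \;=\; \frac{|B(z_0)|}{|B(\alpha)|} \;<\; \frac{\delta/3^K}{\delta} \;=\; \frac{1}{3^K}.
\]
Since the minimum of $K$ positive numbers is at most their geometric mean, some index $j$ satisfies $|z_0-\beta_j|/|\alpha-\beta_j| < 1/3$, i.e., $|\alpha-\beta_j|>3|z_0-\beta_j|$. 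The triangle inequality $|\alpha-\beta_j|\le|\alpha-z_0|+|z_0-\beta_j|$ then rearranges to $2|z_0-\beta_j|<|z_0-\alpha|$, as claimed. Exchanging the roles of $A$ and $B$ in this argument produces the companion statement: for any root $\beta$ of $B$ there is a root $\alpha_i$ of $A$ with $|z_0-\alpha_i|<\tfrac{1}{2}|z_0-\beta|$.

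To conclude, I would let $\alpha^\star$ be a root of $A$ minimizing $|z_0-\alpha_i|$; by the first paragraph $|z_0-\alpha^\star|>0$. Applying the halving observation to $\alpha^\star$ gives a root $\beta$ of $B$ with $|z_0-\beta|<\tfrac{1}{2}|z_0-\alpha^\star|$, and then the symmetric halving observation applied to $\beta$ yields a root $\alpha'$ of $A$ with $|z_0-\alpha'|<\tfrac{1}{2}|z_0-\beta|<\tfrac{1}{4}|z_0-\alpha^\star|$. But this contradicts the choice of $\alpha^\star$ as closest to $z_0$ among roots of $A$. The only genuinely delicate point is calibrating the exponents: the constants $3^N$ and $3^K$ in \eqref{eq:separation formula} are chosen precisely so that the geometric-mean estimate forces some ratio below $1/3$, which is exactly what the triangle inequality needs in order to produce a contraction factor strictly less than $1$ and close the loop.
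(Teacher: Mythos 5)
Your proof is correct. The quantitative core is the same as the paper's: from $|B(z_0)|<\delta/3^K$ and $|B(\alpha)|\ge\delta$ you extract, via the factored form of $B$, an index $j$ with $|z_0-\beta_j|<\tfrac13|\alpha-\beta_j|$ (the paper obtains exactly this statement contrapositively, by showing $|A|\ge\delta/3^N$ outside the union of disks $D(\alpha_i,\tfrac13|\beta_j-\alpha_i|)$, and symmetrically for $B$). Where you genuinely diverge is in closing the contradiction: the paper packages the information into the regions $E_A$, $E_B$, chooses selection functions $\sigma,\tau$, extracts a minimal cycle, and sums two families of triangle-inequality estimates, whereas you convert the $\tfrac13$-bound immediately into the halving statement $|z_0-\beta_j|<\tfrac12|z_0-\alpha|$ and then finish with an extremal choice (the root of $A$ closest to $z_0$), so that two alternating halving steps contradict minimality at once. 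Your route is shorter and avoids the combinatorics entirely; the trade-off is that the paper's heavier construction is not wasted effort, since the sets $E_A$, $E_B$ and the boundary properties recorded in Remark~\ref{re:form of E_A} are reused later to build the integration contours $\Gamma_1$, $\Gamma_2$ in Sections~\ref{se:weak form} and~\ref{se:contours}, which a purely pointwise argument like yours does not provide. (Minor remarks: like the paper's proof, you never need $\|A\|,\|B\|\le1$ here; and your division by $|\alpha-\beta_j|$ and $|B(\alpha)|$ is legitimate precisely because $\delta>0$ forbids common roots, which is worth stating explicitly.)
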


\begin{proof}
	Let us use the standard  notation 
	$$D(a, r) := \{z \in \C: |z - a| < r\}, \; \;  a \in \C, r > 0.$$
	We begin by  fixing a value of $1 \leq j \leq K$. If $ z\in \C$ satisfies 
	$$|z-\alpha_i|\ge \dst\tfrac{1}{3}|\beta_j-\alpha_i | \; \;  \mbox{for all $1 \leq i \leq N$},$$ then \eqref{ANBBB} says that
	\begin{align*}
	|A(z)| =|a_N|\prod_{i=1}^N|z-\alpha_i|
	 \ge \frac{1}{3^N}|a_N|\prod_{i=1}^N
	|\beta_{j}-\alpha_i|
	=\frac{1}{3^N}|A(\beta_j)|
	\ge \frac{\delta}{3^N}.
	\end{align*}
	Thus, outside the region 
	$$\bigcup_{i=1}^N D(\alpha_i,\tfrac{1}{3}|\beta_j - \alpha_i|),$$ the function $|A|$ is
	bounded from below by $\delta/3^N$.
	This being true for every $1 \leq j \leq K$, we deduce that
	\begin{equation}\label{eq:definition of E_A}
		L\big(A,\frac{\delta}{3^N}\big) 
	\subset \bigcap_{j=1}^K\bigcup_{i=1}^N\ D(\alpha_i, \tfrac{1}{3}|\beta_j - \alpha_i|).
	\end{equation}

	Denote the set on the right hand side of the previous inclusion by $E_A$ (see Figure \ref{Fig_ALL}). 
	\begin{figure}\label{figure 1}
\begin{center}
 \includegraphics[width=.5\textwidth]{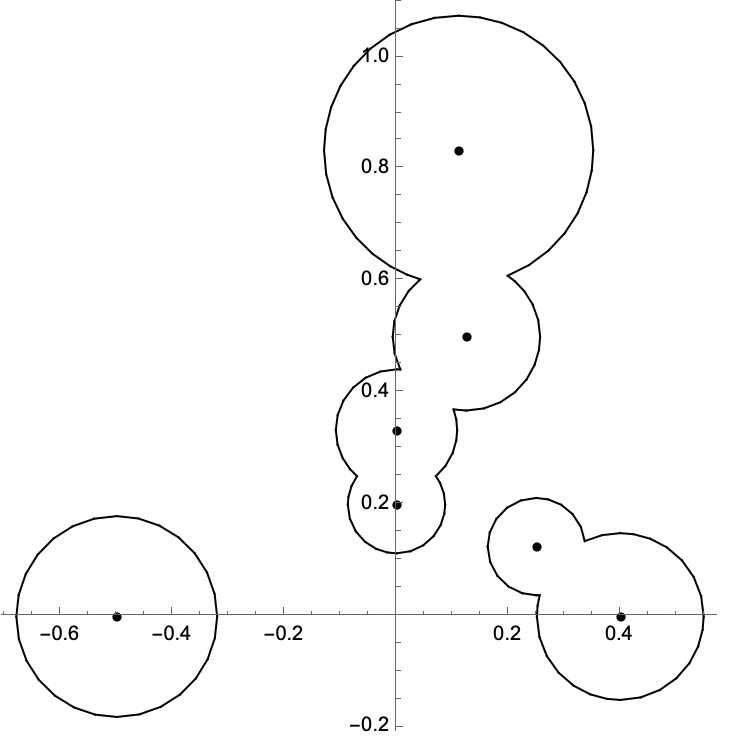}
 \caption{The regions $E_B$ (top - in one piece) and $E_A$ (bottom in two pieces) together with the zeros $\{\alpha_i\}_{i = 1}^{N}$ (in $E_A$) and the zeros $\{\beta_j\}_{j = 1}^{K}$ (in $E_{B}$). Here $A$ is the monic polynomial whose roots are $(\alpha_1, \alpha_2, \alpha_3) = (\tfrac{1}{4} + \tfrac{i}{8}, -\tfrac{1}{2}, \tfrac{2}{5})$ and $B$ is the monic polynomial whose zeros are $(\beta_1, \beta_2, \beta_3, \beta_4) = (\tfrac{1}{9} + \tfrac{5}{6}i, \tfrac{1}{8} + \tfrac{i}{2}, \tfrac{i}{3}, \tfrac{i}{5})$.}
 \label{Fig_ALL}
 \end{center}
\end{figure}
Then
	for every fixed $z\in E_A$, there is a function 
	$$\sigma:\{1, \dots, K\}\to \{1, \dots, N\}$$ such that for every 
	$1 \leq j \leq K$ we have
	\begin{equation}\label{eq:inequalities sigma}
		|z-\alpha_{\sigma(j)}|<\tfrac{1}{3}|\beta_j-\alpha_{\sigma(j)}|.
	\end{equation}

	Similarly, if 
	\begin{equation*}%\label{eq:definition of E_A}
	E_B:=	  \bigcap_{i=1}^N\bigcup_{j=1}^K\ D(\beta_j, \tfrac{1}{3}|\alpha_i-\beta_j|),
	\end{equation*}
	(see Figure \ref{Fig_ALL})
then $L(B,\delta/3^{K})\subset E_B$, and
	for every fixed  $z\in E_B$ there is a function 
	$$\tau:\{1, \dots, N\}\to \{1, \dots, K\}$$ such that for every $1 \leq i \leq N$ we have
	\begin{equation}\label{eq:inequalities tau}
		|z-\beta_{\tau(i)}|<\tfrac{1}{3}|\alpha_i-\beta_{\tau(i)}|.
	\end{equation}

	Towards a contradiction, suppose that
	$$L\big(A, \frac{\delta}{3^N}\big)\bigcap L\big(B, \frac{\delta}{3^K}\big) \not = \varnothing.$$ Then, using the facts that 
	$$L\big(A, \frac{\delta}{3^N}\big) \subset E_{A} \; \;  \mbox{and} \; \;  L\big(B, \frac{\delta}{3^K}\big) \subset E_{B},$$
	 it must be the case that $E_A\cap E_B\not=\varnothing$, and so there exists a $w\in E_A\cap E_B $. Let $\sigma, \tau$ denote the functions from \eqref{eq:inequalities sigma} and \eqref{eq:inequalities tau} corresponding to this $w$.
	
	Start with a $k\in\{1, \dots, K\}$ and alternately apply the functions $\sigma$ and $\tau$ to $k$, i.e., 
	$$\sigma(k),\; \tau(\sigma(k)), \;\sigma(\tau(\sigma(k))),\; \dots.$$ At some point the elements of the above sequence  must repeat. We will consider a minimal cycle in the sense that there are no repetitions. In other words, there is an integer $p$, distinct elements $j_1, \dots, j_p\in \{1, \dots, K\}$, and distinct elements $i_1, \dots, i_p\in \{1, \dots, N\}$, such that $\sigma(j_s)=i_s$ for $1\le s\le p$, while $\tau(i_s)=j_{s+1}$ for $1\le s\le p-1$, and $\tau(i_p)=j_1$ (see Figure \ref{figx}).

	For the rest of the argument, we will only use $\alpha_{\sigma(j)}$ and $\beta_{\tau(i)}$ for the selected $i$ and $j$. Therefore, to simplify things, we renumber  (corresponding to a change of notation of the indices of $\alpha$ and $\beta$) and assume that $i_s=j_s=s$ for $1\le s\le p$. With this renumbering,  the function $\sigma$ restricted to the relevant set $\{1, \dots, p\}$ becomes the identity, while $\tau(s)=s+1$ for $1\le s\le p-1$ and $\tau(p)=1$.
	
	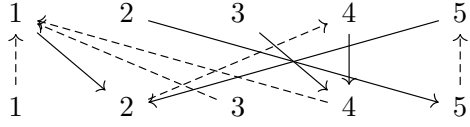
\begin{figure}

\begin{tikzcd}
	1\arrow[rd]  & 2\arrow[drrr] & 3\arrow[rd] & 4\arrow[d] & 5\arrow[dlll]\\
	1\arrow[u, dashrightarrow]  & 2\arrow[urr, dashrightarrow] & 3\arrow[ull, dashrightarrow] & 4\arrow[ulll, dashrightarrow] & 5\arrow[u, dashrightarrow]
\end{tikzcd}

\caption{An example with $N=K=5$. Solid lines represent $\sigma$ while dashed lines represent $\tau$. Starting with 3 on the top row, we obtain $4=\sigma(3),1=\tau(4), 2=\sigma(1), 4=\tau(2), 4=\sigma(4)  $. The associated minimal cycle that starts with 1 on the top row, is
$\{1, 2=\sigma(1), 4=\tau(2), 4=\sigma(4)\}$. So $p=2$ and $j_1=1, j_2=4, i_1=2, i_2=4$.}\label{figx}
		\end{figure}
	
	We may also rewrite the  inequalities~\eqref{eq:inequalities sigma} and~\eqref{eq:inequalities tau} for $z=w$ as follows, where the inequalities from \eqref{eq:inequalities sigma} appear in the left column, while those from \eqref{eq:inequalities tau} appear in the right column:
	\begin{equation}\label{eq:particular case}
		\begin{split}
			|w-\alpha_1|<\tfrac{1}{3}|\alpha_1 - \beta_1| &\qquad |w-\beta_2|<\tfrac{1}{3}| \beta_2 - \alpha_1|\\
			|w-\alpha_2|<\tfrac{1}{3}|\alpha_2 - \beta_2| &\qquad |w-\beta_3|<\tfrac{1}{3}|\beta_3 - \alpha_2|\\
			\vdots\\
			|w-\alpha_p|<\tfrac{1}{3}|\alpha_p - \beta_p| &\qquad |w-\beta_1|<\tfrac{1}{3}|\beta_{1} - \alpha_p|.
		\end{split}
	\end{equation}
	Adding the two inequalities in each line and using the triangle inequality, we  obtain a series of inequalities that do not contain the term $w$:
	
	\begin{equation*} 
		\begin{split}
			| \beta_2 - \alpha_1|&<\tfrac{1}{3}|\alpha_1 - \beta_1|+\tfrac{1}{3}|\beta_2 - \alpha_1|\\
			|\beta_3 - \alpha_2|&<\tfrac{1}{3}|\alpha_2 - \beta_2|+\tfrac{1}{3}|\beta_3 - \alpha_2|\\
			\vdots\\
			| \beta_{1} - \alpha_p|&<\tfrac{1}{3}|\alpha_p - \beta_p|+\tfrac{1}{3}|\beta_{1} - \alpha_p|
		\end{split}
	\end{equation*}
	or, equivalently, 
	\begin{equation}\label{eq:without w 1}
		\begin{split}
			|\beta_2 - \alpha_1 |&<\tfrac{1}{2}|\alpha_1 - \beta_1| \\
			|\beta_3 - \alpha_2|&<\tfrac{1}{2}|\alpha_2 - \beta_2| \\
			\vdots\\
			|\beta_{1} - \alpha_p|&<\tfrac{1}{2}|\alpha_p - \beta_p|.
		\end{split}
	\end{equation}
	Returning to~\eqref{eq:particular case}, we now add each inequality in the right column with the one in the next row of the left column (with the obvious change in the last row). Applying the triangle inequality and reducing the similar terms, we obtain 
	\begin{equation}\label{eq:without w 2}
		\begin{split}
			|\beta_2 - \alpha_2|&<\tfrac{1}{2}|\beta_2 - \alpha_1|\\
			|\beta_3 - \alpha_3|&<\tfrac{1}{2}|\beta_3 - \alpha_2|\\
			\vdots\\
			|\beta_1 - \alpha_1|&<\tfrac{1}{2}|\beta_1 - \alpha_p|.
		\end{split}
	\end{equation}
	Now add all the inequalities from \eqref{eq:without w 1} and~\eqref{eq:without w 2} to obtain 
	\[
	\begin{split}
		&\sum_{i=2}^{p}|\beta_i - \alpha_{i-1} | + |\beta_1 - \alpha_p| +  \sum_{i=1}^{p}|\beta_i - \alpha_i|
		\\
		&\qquad\qquad<\tfrac{1}{2} \left(\sum_{i=1}^{p}|\alpha_i - \beta_i|+
		\sum_{i=2}^{p}|\beta_i - \alpha_{i-1} |+|\beta_1 -\alpha_p |\right),
	\end{split}
	\]
	which is an obvious contradiction. This shows that $E_A\cap E_B=\varnothing$ which, as argued earlier, proves \eqref{le:separation}.
\end{proof}

%In any case $\partial E_A$ is a finite union of circular arcs centered at the zeros of $A$ 
%(the number of which is controlled by $N$).

\begin{Remark}\label{re:form of E_A} 
	Observe that  $\{\alpha_i\}_{i = 1}^{N}$ (the zeros of $A$) belong to the regions $$ \bigcup_{i=1}^N D(\alpha_i, \tfrac{1}{3}|\beta_j - \alpha_i|) \; \; \mbox{ for all $1 \leq j \leq K$},$$ and thus belong to   $ E_A $. On the other hand, for a fixed $ j $, the point $ \beta_j $ (a zero of $B$) is at a positive distance from the region $$ \bigcup_{i=1}^N D(\alpha_i,  \tfrac{1}{3}|\beta_j - \alpha_i|),$$ and therefore at a positive distance from $ E_A $.  
	Furthermore, any $z\in\partial E_A$ must satisfy
 $|A(z)|\ge \delta/3^N$ (since $z$ is in the closure of the complement of $L(A,\delta/3^N)$) as well as $|B(z)|\geq \delta/3^K$ (by~\eqref{eq:separation formula}).  Similar considerations apply to $E_B$.	
\end{Remark}

An immediate consequence of Lemma~\ref{le:separation} is the equivalence of the quantities $\delta(A,B)$ and $\widetilde{\delta}(A,B)$.

\begin{Corollary}\label{co:equivalence of deltas}
	If $\widetilde{\delta}(A,B)$ is defined by~\eqref{eq:tilde delta},	
then 
\begin{equation}\label{eq:equiv of deltas}
  \frac{1}{3^{\max(N,K)}}  \delta(A, B)  \le
	\widetilde\delta(A,B)\le \delta(A, B).
\end{equation}
\end{Corollary}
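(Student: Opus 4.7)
The inequality $\widetilde\delta(A,B)\le \delta(A,B)$ is immediate from the definitions, since the minimum in $\widetilde\delta$ is taken over a set containing the one used for $\delta$ (namely, the zeros of $A$ or $B$).

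For the lower bound $\widetilde\delta(A,B)\ge 3^{-\max(N,K)}\delta(A,B)$, my plan is to invoke Lemma~\ref{le:separation} directly. First, since both $\delta(A,B)$ and $\widetilde\delta(A,B)$ scale linearly when $A$ and $B$ are multiplied by a common positive constant, I would normalize: setting $M = \max(\|A\|,\|B\|)$ and replacing $(A,B)$ by $(A/M,B/M)$, the asserted inequality is invariant, so I may assume $\|A\|,\|B\|\le 1$, putting us in the hypothesis of Lemma~\ref{le:separation}.

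Now for an arbitrary $z\in\C$, the conclusion $L(A,\delta/3^N)\cap L(B,\delta/3^K)=\varnothing$ of Lemma~\ref{le:separation} says that $z$ cannot satisfy both $|A(z)|<\delta/3^N$ and $|B(z)|<\delta/3^K$ simultaneously. Hence either $|A(z)|\ge \delta/3^N$ or $|B(z)|\ge \delta/3^K$, and in either case
\[
|A(z)|+|B(z)|\ge \min\!\left(\frac{\delta}{3^N},\frac{\delta}{3^K}\right)=\frac{\delta}{3^{\max(N,K)}}.
\]
Taking the infimum over $z\in\C$ yields $\widetilde\delta(A,B)\ge \delta(A,B)/3^{\max(N,K)}$, completing the proof.

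There is no real obstacle here; the corollary is essentially a direct reformulation of the separation lemma. The only mild subtlety is the normalization step, which is handled by the joint homogeneity of $\delta$ and $\widetilde\delta$.
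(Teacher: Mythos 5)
Your proof is correct and follows essentially the same route as the paper: the upper bound is immediate, and the lower bound comes directly from Lemma~\ref{le:separation}, since the emptiness of $L(A,\delta/3^N)\cap L(B,\delta/3^K)$ forces every $z\in\C$ to satisfy $|A(z)|\ge\delta/3^N$ or $|B(z)|\ge\delta/3^K$. Your extra normalization step via the joint homogeneity of $\delta$ and $\widetilde\delta$ is harmless (and even slightly more careful about the hypothesis $\|A\|,\|B\|\le 1$ of the lemma), but it does not change the argument.
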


\begin{proof}
	The inequality on the right hand side of \eqref{eq:equiv of deltas} is immediate.	
	For the left hand side, note that~\eqref{eq:separation formula} is equivalent to
	\[
	\Big\{z \in \C: |A(z)|\ge \frac{\delta}{3^N}\Big\}\bigcup
	\Big\{z \in \C: |B(z)|\ge \frac{\delta}{3^K}\Big\}=\C.
	\]
	This implies that any $z\in\C$ must satisfy either $|A(z)|\ge \delta/3^N$ or $|B(z)|\ge \delta/3^K$. Therefore,
	\[
	\widetilde\delta(A,B)\ge \frac{\delta(A,B)}{3^{\max(N,K)}},
	\]
	which is precisely the left hand side of~\eqref{eq:equiv of deltas}.
\end{proof}

We have noted in Section 2 that $\widetilde{\delta}$ is bounded on $\mathfrak{B}$ defined by~\eqref{eq:product ball}. From Corollary 3.11 it follows that the same is true for $\delta$, and so there exists a $T>0$, depending only on $N$ and $K$, such that
\begin{equation}\label{eq:boundedness of delta}
	\delta(A,B) \le T
\end{equation}
for all $A\in\C_N[z]$, $B\in\C_K[z]$ with $\| A \|\le1$, $\| B \|\le 1$.

Lemma~\ref{le:separation} will be used in the sequel to construct the systems of contours that appear in a Cauchy integral method to compute the coefficients of the polynomials  $R$ and $S$.

\section{The Cauchy integral method}

The following lemma is implicit  in~\cite[Section 1.3]{Yger}, but we  include a direct proof in order to make our paper self-contained.

\begin{Lemma}\label{le:general formula}
	Suppose $\Omega\subset\C$ is an open set, $F,G$ are two analytic functions defined on $\Omega$, $\alpha\in \Omega$  is a zero of $G$, and $\Gamma$ is a positively oriented system of closed rectifiable contours in $\Omega\setminus\{\alpha\}$ whose index with respect to $\alpha$ is 1, and such that $G(\zeta)\not=0$ for any $\zeta\in\Gamma$.  Define 
	\[
	g(\zeta, z)=\frac{G(\zeta)-G(z)}{\zeta-z} \quad \mbox{and} \quad 
	\phi(z)=\frac{1}{2\pi i} \int_\Gamma \frac{g(\zeta, z)F(\zeta)}{G(\zeta)} \, d\zeta.
	\]
	Then:
	\begin{enumerate}
		\item $\phi$ is analytic on $\Omega\setminus\Gamma$ and $\phi(\alpha)=F(\alpha)$.
		
		\item If $G \in \C_{N}[z]$, then $\phi \in \C_{N-1}[z]$.
		
	\end{enumerate}
	
\end{Lemma}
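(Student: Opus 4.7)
The plan is to exploit joint analyticity of $g(\zeta,z)$ on $\Omega\times\Omega$, reduce the value $\phi(\alpha)$ to a direct application of Cauchy's integral formula, and then, under the polynomial hypothesis on $G$, expand $g(\zeta,z)$ explicitly in powers of $z$ to read off the degree of $\phi$.

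First, for part (i), I would observe that although $g(\zeta,z) = (G(\zeta) - G(z))/(\zeta - z)$ has an apparent singularity on the diagonal $\zeta = z$, this singularity is removable: since $G$ is analytic on $\Omega$, the two variable function $G(\zeta) - G(z)$ vanishes on $\{\zeta = z\}$ and a standard Hartogs/removable-singularity argument (or the convergent power series expansion of $G$ around any point of $\Omega$) shows that $g$ extends to a jointly analytic function on $\Omega \times \Omega$. For each $z \in \Omega \setminus \Gamma$, the integrand $\zeta \mapsto g(\zeta,z) F(\zeta)/G(\zeta)$ is continuous on the compact set $\Gamma$ (since $G$ does not vanish there), and depends analytically on $z$ uniformly on $\Gamma$; the usual parameter-dependence theorem for Cauchy-type integrals yields that $\phi$ is analytic on $\Omega \setminus \Gamma$. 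To evaluate $\phi(\alpha)$, I would substitute $z = \alpha$ and use $G(\alpha) = 0$ to write $g(\zeta,\alpha) = G(\zeta)/(\zeta - \alpha)$. The $G(\zeta)$ then cancels the factor in the denominator, giving
$$\phi(\alpha) = \frac{1}{2\pi i}\int_\Gamma \frac{F(\zeta)}{\zeta - \alpha}\,d\zeta,$$
which equals $F(\alpha)$ by Cauchy's formula, since $\Gamma$ has index $1$ with respect to $\alpha$ and $F$ is analytic on $\Omega$.

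For part (ii), I would write $G(\zeta) = \sum_{k=0}^{N} c_k \zeta^k$ and use the elementary identity $\zeta^k - z^k = (\zeta - z)\sum_{j=0}^{k-1}\zeta^{k-1-j} z^j$ to obtain
$$g(\zeta,z) = \sum_{k=1}^{N} c_k \sum_{j=0}^{k-1} \zeta^{k-1-j} z^j = \sum_{j=0}^{N-1} z^j \Bigl(\sum_{k=j+1}^{N} c_k \zeta^{k-1-j}\Bigr),$$
exhibiting $g(\zeta,z)$ as a polynomial in $z$ of degree at most $N-1$ whose coefficients are polynomials in $\zeta$. Swapping the (finite) sum with the integral yields
$$\phi(z) = \sum_{j=0}^{N-1} z^j \cdot \frac{1}{2\pi i}\int_\Gamma \frac{F(\zeta)}{G(\zeta)} \sum_{k=j+1}^{N} c_k \zeta^{k-1-j}\,d\zeta,$$
so $\phi \in \C_{N-1}[z]$. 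The only step requiring any care is the joint analyticity of $g$ on $\Omega\times\Omega$; once this is in hand, everything else is a direct application of Cauchy's formula and a finite algebraic expansion, so I do not anticipate a serious obstacle.
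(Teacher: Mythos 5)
Your proposal is correct and follows essentially the same route as the paper: evaluate $\phi(\alpha)$ by noting $g(\zeta,\alpha)=G(\zeta)/(\zeta-\alpha)$ when $G(\alpha)=0$ and apply Cauchy's formula, then expand $g(\zeta,z)$ in powers of $z$ to see $\deg_z g\le N-1$ for part (ii), which is exactly the paper's argument (the expansion appears there as a separate remark). The only difference is that you prove joint analyticity of $g$ across the diagonal, which is more than needed here, since $\zeta\in\Gamma$ and $z\in\Omega\setminus\Gamma$ are automatically distinct.
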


\begin{proof}
	(i): For $\zeta\in \Gamma$, the function $z \mapsto g(\zeta, z)$ is analytic on $\Omega\setminus\Gamma$. This implies that $\phi$ is also analytic on $\Omega\setminus\Gamma$.
	 Since $G(\alpha)=0$, we see from Cauchy's formula that 
	\[
	\phi(\alpha)=\frac{1}{2\pi i} \int_\Gamma \frac{G(\zeta)F(\zeta)}{G(\zeta)(\zeta-\alpha)} \, d\zeta=\frac{1}{2\pi i} \int_\Gamma \frac{F(\zeta)}{\zeta-\alpha} \, d\zeta
	=F(\alpha).
	\] 
	
	(ii): 
	If $G \in \C_{N}[z]$, then for every fixed $\zeta\in\Gamma$, $g(\zeta, \cdot) \in \C_{N-1}[z]$, which implies that $\phi \in \C_{N-1}[z]$.	
\end{proof}

\begin{Remark}\label{re:development}
So that we can use them later, we now compute the coefficients of $\phi \in \C_{N - 1}[z]$ when $G(z)=g_0+g_1 z+\dots+g_Nz^N$.
  Observe that
\begin{align*}
	G(\zeta)-G(z)&=\sum_{k=1}^Ng_k(\zeta^k-z^k)\\
	&=(\zeta-z)\sum_{k=1}^Ng_k\frac{\zeta^k-z^k}{\zeta-z}\\
	&=(\zeta-z)\sum_{k=1}^Ng_k\sum_{j=0}^{k-1}z^j\zeta^{k-1-j}\\
	&=(\zeta-z)\sum_{j=0}^{N-1}z^j\sum_{k=j+1}^N g_k\zeta^{k-(j+1)},
\end{align*}
and hence
\[
g(\zeta,z)=\sum_{j=0}^{N-1}z^j\sum_{k=j+1}^N g_k\zeta^{k-(j+1)}.
\]
Thus,
\begin{equation}\label{Form of phi}
	\phi(z)=\sum_{j=0}^{N-1}z^j\sum_{k=j+1}^N g_k\frac{1}{2\pi i}\int_{\Gamma}\frac{F(\zeta)\zeta^{k-(j+1)}}
	{G(\zeta)}d\zeta,
\end{equation}
where $\Gamma$ is a system of contours as in Lemma \ref{le:general formula}.
\end{Remark}

%\begin{Remark}\label{re:contour}
% Suppose $\alpha_i$, $1\le i\le N$, $\beta_j$, $1\le j\le K$ are distinct complex numbers. One can always find a contour $\Gamma$ in $\C\setminus\{\beta_j\}$ such that the index of $\Gamma$ with respect to any $\alpha_i$ is~1, while for any $\beta_j$ it is~0. \emph{Do we need to say more?}
%\end{Remark}

Next we obtain the solution to a B\'ezout-type equation when the zeros of $A$ and $B$ are simple. For $\zeta, z \in \C$, define 
\[
a(\zeta, z)=\frac{A(\zeta)-A(z)}{\zeta-z} \; \;  \mbox{and} \; \;  
b(\zeta, z)=\frac{B(\zeta)-B(z)}{\zeta-z}, \; \; \zeta \not = z,
\]
and of course, $a(\zeta, \zeta) = A'(\zeta)$, $b(\zeta, \zeta) = B'(\zeta)$.

\begin{Corollary}\label{co:Bezout extended}
	Let  $A, B, P$ be polynomials with $\deg A=N$, $\deg B=K$, and $\deg P\le N+K-1$. Suppose the roots $\{\alpha_{i}\}_{i = 1}^{N}$ of $A$ and the roots $\{\beta_{j}\}_{j = 1 }^{K}$ of $B$ are all simple and $\{\alpha_i\}_{i = 1}^{N} \cap \{\beta_j\}_{j = 1}^{K} = \varnothing$.  Consider the equation
	\begin{equation}\label{eq:bezout extended}
		A(z)\widehat R(z)+B(z)\widehat S(z)=P(z).
	\end{equation}
Suppose that $\Gamma_1, \Gamma_2$ are positively oriented systems of closed rectifiable contours in $\C$ such that $\Gamma_1$ surrounds the roots of $A$ (with index $1$) and not those of $B$, while the opposite is true for $\Gamma_2$.
Then the  unique solutions $\widehat R, \widehat S$ of~\eqref{eq:bezout extended} with the property that $\deg \widehat R\le K-1$ and $\deg \widehat S\le N-1$ are given by the formulas
\begin{equation}\label{eq:def of S_p, R_p}
		\widehat S(z):=\frac{1}{2\pi i} \int_{\Gamma_1} \frac{a(\zeta, z)P(\zeta)}{A(\zeta)B(\zeta)} \, d\zeta, \quad
			\widehat R(z):=\frac{1}{2\pi i} \int_{\Gamma_2} \frac{b(\zeta, z)P(\zeta)}{A(\zeta)B(\zeta)} \, d\zeta.
\end{equation}

\end{Corollary}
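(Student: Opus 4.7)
The plan is to derive everything directly from Lemma~\ref{le:general formula} by two well-chosen applications, one for $\widehat{S}$ and one for $\widehat{R}$, and then verify the B\'ezout equation by an interpolation argument.

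First, to analyze $\widehat{S}$, I would apply Lemma~\ref{le:general formula} on $\Omega = \C\setminus\{\beta_1,\ldots,\beta_K\}$ with $F(\zeta) = P(\zeta)/B(\zeta)$ (which is analytic on $\Omega$ because the $\beta_j$ have been removed), $G = A$, and contour $\Gamma = \Gamma_1$. Since $\Gamma_1$ surrounds each zero $\alpha_i$ of $A$ with index $1$ and avoids all $\beta_j$'s, the hypotheses of the lemma hold at every $\alpha_i$ simultaneously. Part (ii) of the lemma gives $\deg \widehat{S} \le N-1$, and part (i) gives the interpolation values
\[
\widehat{S}(\alpha_i) = F(\alpha_i) = \frac{P(\alpha_i)}{B(\alpha_i)}, \qquad 1 \le i \le N.
\]
Symmetrically, applying the lemma to $F = P/A$, $G = B$, $\Omega = \C \setminus \{\alpha_1,\ldots,\alpha_N\}$, and contour $\Gamma_2$ yields $\deg \widehat{R} \le K-1$ and $\widehat{R}(\beta_j) = P(\beta_j)/A(\beta_j)$ for $1 \le j \le K$.

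Next, to check that $A\widehat{R} + B\widehat{S} = P$, I would consider the polynomial
\[
Q(z) := A(z)\widehat{R}(z) + B(z)\widehat{S}(z) - P(z),
\]
which satisfies $\deg Q \le N + K - 1$. Evaluating at $\alpha_i$ gives
$Q(\alpha_i) = B(\alpha_i)\widehat{S}(\alpha_i) - P(\alpha_i) = 0$, and similarly $Q(\beta_j) = 0$. Since the $\alpha_i$'s and $\beta_j$'s are $N+K$ distinct points and $\deg Q \le N+K-1$, it follows that $Q \equiv 0$.

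Finally, for uniqueness, if $(\widehat{R}_1,\widehat{S}_1)$ and $(\widehat{R}_2,\widehat{S}_2)$ are two solutions satisfying the degree constraints, then $A(\widehat{R}_1 - \widehat{R}_2) = -B(\widehat{S}_1 - \widehat{S}_2)$. Because $A$ and $B$ share no roots and $\deg(\widehat{R}_1 - \widehat{R}_2) \le K-1 < \deg B$, the polynomial $B$ must divide $\widehat{R}_1 - \widehat{R}_2$, forcing $\widehat{R}_1 = \widehat{R}_2$, and similarly $\widehat{S}_1 = \widehat{S}_2$. I do not anticipate any serious obstacle: the crux is just recognizing the right choice of $F$ and $G$ so that the interpolation values produced by the Cauchy-type formula match what the B\'ezout equation forces at the roots of $A$ and $B$.
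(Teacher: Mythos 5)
Your proposal is correct and follows essentially the same route as the paper: apply Lemma~\ref{le:general formula} with $G=A$, $F=P/B$ on $\Gamma_1$ (and symmetrically with $G=B$, $F=P/A$ on $\Gamma_2$) to get the degree bounds and the interpolation values $\widehat S(\alpha_i)=P(\alpha_i)/B(\alpha_i)$, $\widehat R(\beta_j)=P(\beta_j)/A(\beta_j)$, then conclude $A\widehat R+B\widehat S=P$ because both sides are polynomials of degree at most $N+K-1$ agreeing at the $N+K$ distinct roots. The only (immaterial) deviation is in the uniqueness step, where you use coprimality and a divisibility/degree argument while the paper evaluates the difference at the roots of $A$ and $B$; both are valid.
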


\begin{proof}
	Apply
	 Lemma~\ref{le:general formula}  to the functions $G(z)=A(z)$ and $F(z)= P(z)/B(z)$, the region $\Omega=\C\setminus\{\beta_1, \dots, \beta_K\}$, and  the contour $\Gamma_1$. The first formula in~\eqref{eq:def of S_p, R_p} yields 
	 a polynomial $\widehat S(z) \in \C_{N - 1}[z]$ such that 
	 \begin{equation}\label{eq:S}
	 	\widehat S(\alpha_i)=\frac{P(\alpha_i)}{B(\alpha_i)} \; \mbox{ for all $1 \leq i \leq N$.}
	 \end{equation}

 Similarly, by the second formula in~\eqref{eq:def of S_p, R_p}, one obtains $\widehat R(z) \in \C_{K - 1}[z]$ such that
	  \begin{equation}\label{eq:R}
	 	\widehat R(\beta_j)=\frac{P(\beta_j)}{A(\beta_j)} \; \mbox{ for all $1 \leq j \leq K$.}
	 \end{equation}

	One can check that $\widehat RA+\widehat SB \in \C_{N+K-1}[z]$ which takes the values $P(\alpha_i)$ at the roots of $A$  and $P(\beta_j)$ at the roots of $B$. The assumption of simplicity of the roots of $A$ and $B$ implies that the total number of roots of $A$ and $B$ is precisely $N+K$. Therefore $\widehat RA+\widehat SB$ takes the same values as $P$ at $N+K$ distinct points. Since $\widehat RA+\widehat SB$ and $P$ are both polynomials of degree at most $N+K-1$, they must coincide everywhere. This proves the existence of $\widehat{R}$ and $\widehat{S}$. 
	 
	 To prove uniqueness, suppose that $\widehat R_1, \widehat S_1$ were another pair of polynomials satisfying the same conditions. Then
	 \[
	 A(z)(\widehat R(z)-\widehat R_1(z))+B(z)(\widehat S(z)-\widehat S_1(z))=0 \; \mbox{for all $z\in\C$.}
	 \]
	 For any $1 \leq i \leq N$, observe that $A(\alpha_i)=0$ while $B(\alpha_i)\not=0$. Therefore,  $\widehat S(\alpha_i)=\widehat S_1(\alpha_i)$. Since $\widehat S, \widehat S_1 \in \C_{N - 1}[z]$, they must coincide everywhere. A similar argument shows that  $\widehat R=\widehat R_1$, which completes the proof. 
	 	\end{proof}

In particular, when $P(z)=z^t$ for some integer $ 0 \leq t\le N+K-1 $, we see, using~\eqref{eq:def of S_p, R_p} as well as formula~\eqref{Form of phi}, that the (unique) solutions $\widehat S$ and $\widehat R$ (of appropriate degree)  of the B\'{e}zout type identity
\begin{equation}\label{eq:bezout with arbitrary monomial}
A(z)\widehat R(z)+B(z)\widehat S(z)=z^t
\end{equation}
are given by 
\begin{equation}\label{eq:solutions for arbitrary monomial}
	\begin{split}
\widehat  S(z)&=\sum_{j=0}^{N-1}z^j\sum_{k=j+1}^N a_k\frac{1}{2\pi i}\int_{ {\Gamma}_1}\frac{\zeta^{t+k-j-1}}
{ {A}(\zeta) {B}(\zeta)}\,d\zeta,\\
 \widehat R(z)&=\sum_{j=0}^{K-1}z^j\sum_{k=j+1}^K  {b}_k\frac{1}{2\pi i}\int_{ {\Gamma}_2}\frac{\zeta^{t+k-j-1}}
{ {A}(\zeta) {B}(\zeta)}\,d\zeta.
\end{split}
\end{equation}

\section{A weak form of the main result}\label{se:weak form}

In this section we show how
  the description of the solutions of~\eqref{eq:bezout with arbitrary monomial}, given by \eqref{eq:solutions for arbitrary monomial}, may be used to obtain estimates for the coefficients of the polynomials $R$ and $S$ in~\eqref{Bezout_C}. Although these estimates are not sufficient to yield Theorem~\ref{th:main}, since they depend on the size of the roots of $ A $ and~$ B $, the proof is much simpler. This weaker version gives a feeling of the general case and is enough to give the bounds on the norm of the inverse of the Sylvester matrix (see Theorem \ref{th:estimates on Sylvester} below).

Recall that we consider
two polynomials
$A$ and $B$ of respective degrees $N$ and $K$, that is, $A(z)=\sum_{k=0}^{N}a_kz^k$, $B(z)=\sum_{k=0}^{K}b_kz^k$. We also assume $\|A\|, \| B \|\le 1$. The roots of $A$ are denoted by $\{\alpha_i\}_{i = 1}^{N}$ and the roots of $B$ by $\{\beta_j\}_{j = 1}^{K}$.  We start with a lemma which will allow us to reduce to the case when  both $A$ and $B$ have simple roots.

\begin{Lemma}\label{le:reduction}
Let $A\in \C_N[z]$, $B\in\mathbb C_K[z]$   satisfy $\|A\|,\|B\|\leq 1$. Then there exist  $A_n\in\C_N[z], B_n\in\C_K[z]$, all with distinct roots, such that
\begin{enumerate}
\item  $\|A_n\|\le 1$ and $\|B_n\|\le 1$ for all $n$; 
\item $A_n\to A$, $B_n\to B$, and 
\item  $\delta(A_n, B_n)\to \delta(A,B)$.
\end{enumerate}
\end{Lemma}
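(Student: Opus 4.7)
The plan is to perturb the zeros of $A$ and $B$ independently to split any multiplicities, and then renormalize so that the norm constraint $\|A_n\|,\|B_n\|\leq 1$ is preserved.

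First, write $A(z)=a_N\prod_{i=1}^N(z-\alpha_i)$, where $a_N\neq 0$ since $\deg A=N$. For each $n\geq 1$ I will choose perturbed points $\alpha_i^{(n)}:=\alpha_i+\eta_{i,n}$ with $|\eta_{i,n}|<1/n$ and with $\alpha_1^{(n)},\ldots,\alpha_N^{(n)}$ pairwise distinct; such a choice clearly exists (any generic small displacement works). Setting
\[
\tilde A_n(z):=a_N\prod_{i=1}^N\bigl(z-\alpha_i^{(n)}\bigr),
\]
I obtain a polynomial of exact degree $N$ with $N$ distinct zeros, and $\tilde A_n\to A$ coefficient-wise, by continuity of the elementary symmetric functions. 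I then define
\[
A_n:=\frac{\tilde A_n}{\max(1,\|\tilde A_n\|)},
\]
which retains the same $N$ distinct zeros, satisfies $\|A_n\|\leq 1$, and still converges to $A$: indeed, $\|\tilde A_n\|\to\|A\|\leq 1$, so the divisor tends to $1$ and $A_n\to A$. The same construction applied to $B$ yields $B_n\in\C_K[z]$ with $K$ distinct roots, $\|B_n\|\leq 1$, and $B_n\to B$. This establishes (i) and (ii).

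For (iii), I will use the representation $\delta(A,B)=\min\{|A(\beta_j)|,|B(\alpha_i)|:1\leq i\leq N,\,1\leq j\leq K\}$ from \eqref{eq:min values A B}. By construction the zero $\alpha_i^{(n)}$ of $A_n$ tends to $\alpha_i$, and the zero $\beta_j^{(n)}$ of $B_n$ tends to $\beta_j$; moreover $A_n\to A$ and $B_n\to B$ uniformly on any fixed compact neighbourhood of these finitely many points. A routine triangle-inequality estimate then gives $|A_n(\beta_j^{(n)})|\to|A(\beta_j)|$ and $|B_n(\alpha_i^{(n)})|\to|B(\alpha_i)|$ for each $i,j$. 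Since $\delta(A_n,B_n)$ and $\delta(A,B)$ are each the minimum of the same fixed finite collection of such quantities, we conclude $\delta(A_n,B_n)\to\delta(A,B)$.

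The only real obstacle is the bookkeeping around the normalization: perturbing the zeros may push $\|\tilde A_n\|$ above $1$ and thereby violate (i), but dividing by $\max(1,\|\tilde A_n\|)$ repairs this without altering the root structure. Because $\|\tilde A_n\|\to\|A\|\leq 1$, the divisor tends to $1$, so neither $A_n\to A$ nor $\delta(A_n,B_n)\to\delta(A,B)$ is spoiled. All other steps reduce to standard continuity statements for roots and values of polynomials.
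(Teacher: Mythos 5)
Your proof is correct and follows essentially the same route as the paper: perturb the roots of $A$ and $B$ into distinct points, rescale to restore the norm bound $\le 1$, and deduce (iii) from the convergence of the perturbed roots together with uniform convergence on compacts via the representation \eqref{eq:min values A B}. The only cosmetic difference is the normalization (you divide by $\max(1,\|\tilde A_n\|)$, the paper rescales by $\|A\|/\|A_n^\sharp\|$), and both work since the scaling factor tends to $1$.
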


\begin{proof}
Write 
\[
A(z)=a_N(z-\alpha_1)\dots (z-\alpha_N)\; \; \mbox{and}\; \; B(z)=b_K(z-\beta_1)\dots (z-\beta_K),
\]	
where we do not assume that the roots $\alpha_1,\alpha_2,\dots,\alpha_N$ are different and similarly for $\beta_1,\beta_2,\dots\beta_K$.
Take sequences $\alpha_1^{(n)},\alpha_2^{(n)},\dots,\alpha_N^{(n)}$ such that for all $1\leq i\leq N$, $\alpha_i^{(n)}\to \alpha_i$ when $n\to\infty$ and for all $1\leq i,s\leq N$, $i\neq s$, and $n\geq 1$, $\alpha_i^{(n)}\neq \alpha_s^{(n)}$. Similarly, take sequences $\beta_1^{(n)},\beta_2^{(n)},\dots,\beta_N^{(n)}$ such that for all $1\leq j\leq K$, $\beta_j^{(n)}\to \beta_j$ when $n\to\infty$ and for all $1\leq j,\ell\leq K$, $j\neq \ell$, and $n\geq 1$, $\beta_j^{(n)}\neq \beta_\ell^{(n)}$. If 
\[
A_n^\sharp(z):=a_N(z-\alpha_1^{(n)})\dots (z-\alpha_N^{(n)}) \; \; \mbox{and} \; \; B_n^\sharp(z):=b_K(z-\beta^{(n)}_1)\dots (z-\beta^{(n)}_K),
\]
define 
$$A_n=\frac{\|A\|}{\|A_n^\sharp \|}A_n^\sharp \quad \mbox{and} \quad B_n=\frac{\|B\|}{\|B_n^\sharp\|}|B_n^\sharp.$$

By construction, $A_n$ and $B_n$ have both simple roots and satisfy $\|A_n\|\le 1$ and $\|B_n\|\le 1$. Moreover, by Viete's formula,  $\|A_n-A\|\to 0$ and $\|B_n-B\|\to 0$ as $n\to\infty$. Finally, from 
$\alpha_i^{(n)}\to \alpha_i$ and $\beta_j^{(n)}\to \beta_j$ it follows that $\delta(A_n,B_n)\to \delta(A,B)$.
\end{proof}

We now need to find appropriate contours $\Gamma_1, \Gamma_2$ to apply to Corollary \ref{co:Bezout extended}.
Returning   to the notation in   Lemma~\ref{le:separation}, we have
\begin{equation}\label{eq:L and E}
L(A,\frac{\delta}{3^N})\subset E_A \; \; \mbox{and} \; \;  L(B,\frac{\delta}{3^K})\cap E_A=\varnothing. 
\end{equation}
In particular, $\alpha_i\in E_A$ for all $1 \leq i \leq N$.

In the definition of $ E_A $, we may apply the distributivity of the (outer) intersection with respect to the (inner) union to obtain
\begin{equation}\label{eq:distributivity}
	E_A=\bigcup_{1\le i_1,\dots, i_K\le N } 	\Omega_{i_1i_2\dots i_K}, 
\end{equation}
where
\begin{equation}\label{eq:the intersections}
	\Omega_{i_1i_2\dots i_K} := \bigcap_{j=1}^K D(\alpha_{i_j}, \tfrac{1}{3}|\alpha_{i_j}-\beta_j|).
\end{equation}
As the $N^K$ sets $	\Omega_{i_1i_2\dots i_K}$ are intersections of disks, the boundary of $ E_A$ is formed by a finite number of circular arcs and is therefore a rectifiable system of closed contours. Let $ \operatorname{len} $ denote the length of such a curve. Since 
\[
\partial E_A\subset \bigcup_{i_1, \dots, i_K} \partial \Omega_{i_1i_2\dots i_K},
\]
we have
\begin{equation}\label{eq:length partial E_A}
	\operatorname{len} (\partial E_A)\le \sum_{i_1, \dots, i_K} \operatorname{len}(\partial\Omega_{i_1i_2\dots i_K} ).
\end{equation}
Define $\Gamma_1$ to be $\partial E_A$ oriented such that its index with respect to points in $E_A$ is 1, while it is 0 with respect to points outside~$\overline{E_A}$ (see Figure \ref{Fig_ALL2}). In particular, this applies to the points $\alpha_i\in E_A$ and $\beta_j\notin\overline{E_A}$ (recall Remark~\ref{re:form of E_A}).

In a similar way, $\Gamma_2$ will be the boundary of $E_B$, appropriately oriented (see Figure \ref{Fig_ALL2}).
Using Remark \ref{re:form of E_A} again, it follows that 
if $\zeta\in\Gamma_1\cup\Gamma_2$, then $|A(\zeta)|\ge \delta/3^N$ and $|B(\zeta)|\ge \delta/3^K$. 

In order to estimate the length of the contours discussed above, we will need the following folklore result (see \cite[Ch. 1]{MR0123962} for a precise reference).

\begin{Lemma}\label{le:convex}
	If $G_1$ and $G_2$ are open convex sets with $G_1\subset G_2$, then $\operatorname{len}(\partial G_1) \leq \operatorname{len}(\partial G_2)$.
	%the length of $\partial G_1$ is not larger than the length of $\partial G_2$.
\end{Lemma}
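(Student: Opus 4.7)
The plan is to exploit the nearest-point projection $\pi : \R^2 \to \overline{G_1}$, which is well-defined and 1-Lipschitz because $\overline{G_1}$ is a closed convex set (a standard consequence of convexity). Since $G_1$ and $G_2$ are bounded open planar convex sets, both $\partial G_1$ and $\partial G_2$ are rectifiable Jordan curves of finite length, so the quantities in the statement are meaningful.

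The central intermediate step is to show that $\pi|_{\partial G_2}$ already hits every point of $\partial G_1$. Given $p \in \partial G_1$, a supporting line to $G_1$ at $p$ produces an outward unit normal $n$ with $\langle y - p, n\rangle \le 0$ for every $y \in \overline{G_1}$. The ray $\{p + tn : t \ge 0\}$ starts in $\overline{G_2}$ and, since $G_2$ is bounded, must leave it at a first point $q = p + t_0 n \in \partial G_2$ with $t_0 \ge 0$. A one-line computation,
\[
|q - y|^2 = t_0^2 + 2 t_0 \langle n, p - y\rangle + |p - y|^2 \ge t_0^2 = |q - p|^2,
\]
identifies $p$ as the closest point of $\overline{G_1}$ to $q$, so $\pi(q) = p$.

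Having established $\partial G_1 \subseteq \pi(\partial G_2)$, I would parametrize $\partial G_2$ by arc length as $\gamma : [0, L] \to \partial G_2$ with $L = \operatorname{len}(\partial G_2)$. The composition $\pi \circ \gamma$ is then 1-Lipschitz on $[0, L]$ and its image contains $\partial G_1$. Consequently the 1-dimensional Hausdorff measure obeys $\mathcal{H}^1(\pi\circ\gamma([0,L])) \le L$, and since for any rectifiable Jordan curve the 1-Hausdorff measure agrees with its length, one concludes $\operatorname{len}(\partial G_1) \le \mathcal{H}^1(\pi\circ\gamma([0,L])) \le L = \operatorname{len}(\partial G_2)$.

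The only real subtlety is this last identification of length with $\mathcal{H}^1$ for Jordan curves together with the fact that 1-Lipschitz maps do not increase Hausdorff measure; both are standard facts from geometric measure theory and match the folklore nature of the lemma. A completely different route, avoiding Hausdorff measure, would be to invoke Cauchy's perimeter formula $\operatorname{len}(\partial G) = \int_0^\pi w_G(\theta)\, d\theta$ for a planar convex body $G$ of width function $w_G$, and observe that $G_1 \subseteq G_2$ forces $w_{G_1} \le w_{G_2}$ pointwise, from which the inequality follows upon integration.
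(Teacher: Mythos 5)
Your proposal is correct, but there is nothing in the paper to compare it with: the authors state the lemma as a folklore fact and simply cite \cite[Ch.~1]{MR0123962} (Eggleston's \emph{Convexity}), giving no proof. Your projection argument is sound: the nearest-point map $\pi$ onto the closed convex set $\overline{G_1}$ is $1$-Lipschitz, your supporting-line ray construction correctly shows $\pi(\partial G_2)\supseteq \partial G_1$ (the displayed computation does identify $p$ as the unique nearest point to $q$; only the word ``first'' should read ``last point of the ray in $\overline{G_2}$''), and the conclusion then follows from the two standard facts you invoke, namely that $1$-Lipschitz maps do not increase $\mathcal{H}^1$ and that $\mathcal{H}^1$ of a rectifiable Jordan curve equals its length. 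One small caveat: the statement as printed does not assume boundedness, and for the degenerate case $G_2=\C$ (empty boundary) it fails literally, so your standing assumption that $G_1,G_2$ are bounded is the right reading and is harmless here, since in the paper the sets in question are intersections of disks contained in disks. Comparing routes: your Hausdorff-measure argument is self-contained but leans on geometric measure theory; your alternative via Cauchy's formula $\operatorname{len}(\partial G)=\int_0^{\pi}w_G(\theta)\,d\theta$ together with monotonicity of the width (equivalently of the support function) under inclusion is the classical convexity-textbook proof and is probably the shortest complete argument; a third elementary option is polygonal approximation plus the observation that intersecting a convex body with a closed half-plane does not increase its perimeter. Any of these would make the paper self-contained where the authors chose merely to cite the literature.
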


\begin{figure}
\begin{center}
 \includegraphics[width=.5\textwidth]{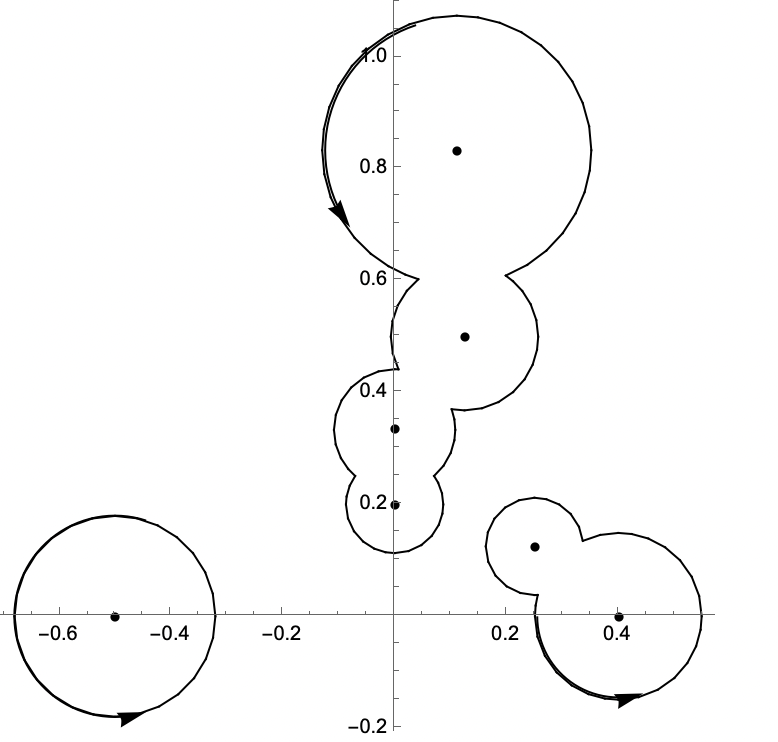}
 \caption{The sets $E_A$ and $E_B$ from Figure~\ref{figure 1} with boundaries oriented.  The contour $\Gamma_1 = \partial E_{A}$ (bottom --  in two pieces) surrounds the zeros of $A$, while $\Gamma_2 = \partial E_{B}$ (top - in one piece) surrounds the zeros of $B$. Recall that $A$ is the monic polynomial whose roots are $(\alpha_1, \alpha_2, \alpha_3) = (\tfrac{1}{4} + \tfrac{i}{8}, -\tfrac{1}{2}, \tfrac{2}{5})$ and $B$ is the monic polynomial whose zeros are $(\beta_1, \beta_2, \beta_3, \beta_4) = (\tfrac{1}{9} + \tfrac{5}{6}i, \tfrac{1}{8} + \tfrac{i}{2}, \tfrac{i}{3}, \tfrac{i}{5})$.}
 \label{Fig_ALL2}
 \end{center}
\end{figure}

\begin{Lemma}\label{co:basic estimate} For all $1 \leq i \leq N$ and $1 \leq j \leq K$, suppose that $|\alpha_i|, |\beta_j|\le M$ for some $M>0$. Fix $\ell\in \N$.
	With the notation above, there is a constant $C_1 > 0$, depending only on $N$ and $K$, such that
	\begin{equation}\label{eq:basic estimate}
	\left|\int_{\Gamma_s}\frac{\zeta^{\ell}}
	{A(\zeta)B(\zeta)}d\zeta\right|\le \frac{C_1 M^{\ell + 1}}{\delta^2}, \quad s = 1, 2.
	\end{equation}
	
\end{Lemma}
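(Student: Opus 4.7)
The plan is to bound the integrand and the length of the contour separately, then apply the basic ML-inequality $\left|\int_\Gamma f \, d\zeta\right| \le \operatorname{len}(\Gamma) \cdot \max_\Gamma |f|$. By symmetry we only treat $s=1$, i.e., the contour $\Gamma_1 = \partial E_A$; the argument for $\Gamma_2 = \partial E_B$ is identical with the roles of $A,B$ interchanged.

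First, I would bound $|\zeta|$ on $\Gamma_1$. Since $\overline{E_A} \subset \bigcap_{j=1}^K \bigcup_{i=1}^N \overline{D(\alpha_i, \frac{1}{3}|\beta_j - \alpha_i|)}$, every $\zeta \in \Gamma_1$ lies in some disk $\overline{D(\alpha_i, \frac{1}{3}|\beta_j - \alpha_i|)}$, so by the triangle inequality and the hypothesis $|\alpha_i|,|\beta_j|\le M$,
\[
|\zeta| \le |\alpha_i| + \tfrac{1}{3}|\beta_j - \alpha_i| \le M + \tfrac{2M}{3} = \tfrac{5M}{3}.
\]
Hence $|\zeta|^\ell \le (5M/3)^\ell$ on $\Gamma_1$. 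Next, for the denominator, I would invoke Remark~\ref{re:form of E_A}, which guarantees $|A(\zeta)| \ge \delta/3^N$ and $|B(\zeta)| \ge \delta/3^K$ for $\zeta \in \partial E_A$, yielding $|A(\zeta)B(\zeta)| \ge \delta^2/3^{N+K}$.

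The remaining—and really the only substantive—step is bounding $\operatorname{len}(\Gamma_1)$. Using the decomposition \eqref{eq:distributivity} into the at most $N^K$ convex pieces $\Omega_{i_1\ldots i_K}$ of \eqref{eq:the intersections}, and the containment $\Omega_{i_1\ldots i_K} \subset D(\alpha_{i_1}, \frac{1}{3}|\alpha_{i_1} - \beta_1|)$, the convexity estimate of Lemma~\ref{le:convex} gives
\[
\operatorname{len}(\partial \Omega_{i_1\ldots i_K}) \le 2\pi \cdot \tfrac{1}{3}|\alpha_{i_1} - \beta_1| \le \tfrac{4\pi M}{3}.
\]
Summing over the $N^K$ index tuples via \eqref{eq:length partial E_A} produces $\operatorname{len}(\Gamma_1) \le \frac{4\pi M N^K}{3}$. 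This is the step I expect to be the main obstacle in the sense that it is where one uses the non-trivial geometric input (convexity + the explicit description of $E_A$); the rest is straightforward pointwise estimation.

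Combining the three bounds,
\[
\left|\int_{\Gamma_1}\frac{\zeta^{\ell}}{A(\zeta)B(\zeta)}\,d\zeta\right|
\le \tfrac{4\pi M N^K}{3} \cdot \frac{(5M/3)^\ell}{\delta^2/3^{N+K}}
= \frac{C_1 M^{\ell+1}}{\delta^2},
\]
where $C_1$ depends only on $N$ and $K$, as required. The identical argument on $\Gamma_2 = \partial E_B$ (swap $A \leftrightarrow B$, $N \leftrightarrow K$, and $\alpha \leftrightarrow \beta$) finishes the lemma.
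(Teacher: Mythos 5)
Your proof is correct and follows essentially the same route as the paper: an ML-estimate combining the lower bound $|A|,|B|\ge \delta/3^N,\delta/3^K$ on $\partial E_A$ from Remark~\ref{re:form of E_A}, the bound $|\zeta|\le 5M/3$, and the length bound obtained from the decomposition \eqref{eq:distributivity} into at most $N^K$ convex pieces together with Lemma~\ref{le:convex}. The only (immaterial) difference is that you enclose each piece $\Omega_{i_1\dots i_K}$ in the disk $D(\alpha_{i_1},\tfrac13|\alpha_{i_1}-\beta_1|)$ of radius at most $2M/3$, whereas the paper uses $E_A\subset D(0,5M/3)$, giving a slightly different but equally admissible constant.
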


\begin{proof} 
	
	The hypothesis says that $|\alpha_i-\beta_j|\le 2M$ for all $i$ and $j$, and so $E_A \subset D(0, 5M/3)$. By Lemma~\ref{le:convex}, we have
	$$
	\operatorname{len}(\partial\Omega_{i_1i_2\dots i_K})\le 2\pi \frac{5M}{3} \; \; \mbox{for every $i_1, \dots, i_K$.}
	$$
	Then~\eqref{eq:length partial E_A} yields 
	\[
	\operatorname{len}(\Gamma_1)\le \frac{10\pi MN^K}{3},
	\]
and therefore, 
	\[
	\left|\int_{\Gamma_1}\frac{\zeta^{\ell}}
	{A(\zeta)B(\zeta)}d\zeta\right|\le \frac{10\pi MN^K }{3}
	\left(\frac{5M}{3}\right)^{\ell}\frac{3^{N + K}}{\delta^2}.
	\]
	This proves \eqref{eq:basic estimate} for $\Gamma_1$. A similar argument is used for $\Gamma_2$.
\end{proof}

As a consequence, one obtains a weaker form of Theorem~\ref{th:main}, where the bounds on the coefficients of $R$ and $S$  depend on the size of the zeros of $ A,B $.

\begin{Theorem}\label{co:main theorem weak}
	Let $A$ and $B$ be two polynomials of respective degrees $N$ and $K$. Suppose that $ \| A \|, \| B \|\le 1 $ and their roots are bounded in modulus by $ M>0 $.  
	If $ \delta(A,B)>0$, then there is $ C(N,M,K)>0 $ such that the unique minimal 
	polynomials 
	$R \in \C_{K - 1}[z]$ and $S \in \C_{N - 1}[z]$ from \eqref{Bezout_C} satisfy
	\begin{equation*} 
	\|R\|,\|S\|\le \frac{C(N,K,M)}{\delta^2}.
	\end{equation*}
\end{Theorem}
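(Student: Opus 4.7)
The plan is to combine the contour integral representation for the minimal B\'ezout solutions from Corollary~\ref{co:Bezout extended} (specialized to $P(z)=1$) with the basic estimate in Lemma~\ref{co:basic estimate}. First, I would reduce to the case of simple roots. By Lemma~\ref{le:reduction}, choose sequences $A_n \in \C_N[z]$ and $B_n \in \C_K[z]$, each with only simple roots, such that $\|A_n\|,\|B_n\|\le 1$, $A_n\to A$, $B_n\to B$, and $\delta(A_n,B_n)\to \delta(A,B)>0$. Since the roots of $A_n$ and $B_n$ approximate those of $A,B$, for $n$ sufficiently large all such roots are bounded in modulus by $M+1$. The hypothesis $\delta(A_n,B_n)>0$ guarantees $(A_n,B_n)\in\mathcal{T}$, so by Lemma~\ref{le:sylvester} the minimal solutions $(R_n,S_n)=\phi(A_n,B_n)$ converge coefficient-wise to $(R,S)=\phi(A,B)$. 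It therefore suffices to establish the estimate for the approximating pairs, with a constant of the form $C(N,K,M+1)$.

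Second, assume $A$ and $B$ have simple roots. As $P(z)=1$ has degree $0\le N+K-1$, Corollary~\ref{co:Bezout extended} applies. Take $\Gamma_1=\partial E_A$ and $\Gamma_2=\partial E_B$, the positively oriented boundaries described after Lemma~\ref{le:separation}; Remark~\ref{re:form of E_A} confirms that $\Gamma_1$ surrounds the roots of $A$ with index $1$ while avoiding those of $B$, and vice versa for $\Gamma_2$, so the hypotheses of Corollary~\ref{co:Bezout extended} are met. Setting $t=0$ in formula \eqref{eq:solutions for arbitrary monomial} yields
\begin{equation*}
S(z)=\sum_{j=0}^{N-1}z^j\sum_{k=j+1}^N a_k\,\frac{1}{2\pi i}\int_{\Gamma_1}\frac{\zeta^{k-j-1}}{A(\zeta)B(\zeta)}\,d\zeta,
\end{equation*}
and the analogous formula for $R$ with $b_k$ and $\Gamma_2$.

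Third, I would bound each coefficient of $S$ (and, by symmetry, of $R$) using Lemma~\ref{co:basic estimate} with $\ell=k-j-1$, which satisfies $0\le \ell\le N-1$ (respectively $\le K-1$). The hypothesis $\|A\|,\|B\|\le 1$ gives $|a_k|,|b_k|\le 1$, and the roots being bounded by $M+1$ allows us to apply Lemma~\ref{co:basic estimate}, producing
\begin{equation*}
|s_j|\le \sum_{k=j+1}^N |a_k|\cdot\frac{C_1(M+1)^{k-j}}{2\pi\,\delta^2}\le \frac{N\,C_1\,\max(M+1,(M+1)^N)}{2\pi\,\delta(A_n,B_n)^2}.
\end{equation*}
The same type of bound applies to $\|R\|$, yielding a constant depending only on $N$, $K$, and $M$. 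Taking the limit $n\to\infty$ (with $\delta(A_n,B_n)\to \delta(A,B)$) passes the bound to $(R,S)$.

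The main obstacle is really just verifying that all the pieces fit together cleanly: the contours from Lemma~\ref{le:separation} must be valid for Corollary~\ref{co:Bezout extended}, which is where the boundary behavior described in Remark~\ref{re:form of E_A} plays its role; and one must track the dependence on $M$ carefully through the approximation argument, noting that the exponent of $M$ stays bounded in terms of $N$ and $K$ (so the constant absorbs a term like $(M+1)^{\max(N,K)}$). Everything else reduces to the triangle inequality applied to a sum with a bounded number of terms.
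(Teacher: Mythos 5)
Your proposal is correct and follows essentially the same route as the paper: reduce to simple roots via Lemma~\ref{le:sylvester} and Lemma~\ref{le:reduction}, take $\Gamma_1=\partial E_A$, $\Gamma_2=\partial E_B$ as in Section~\ref{se:weak form}, use the $t=0$ case of \eqref{eq:solutions for arbitrary monomial}, and bound each coefficient with Lemma~\ref{co:basic estimate}. The only difference is that you spell out the approximation/limit step and the harmless replacement of $M$ by $M+1$, which the paper leaves implicit.
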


\begin{proof}
By Lemma \ref{le:sylvester} and Lemma \ref{le:reduction} we can assume that the roots of $A$ and $B$ are simple. 
	From~\eqref{eq:solutions for arbitrary monomial} (when $ t=0 $), it follows that 
	$
	S(z)=\sum_{j=0}^{N-1} s_jz^j,
	$
	with
	\[
	s_j=\sum_{k=j+1}^Na_k \frac{1}{2\pi i} \int_{\Gamma_1} \frac{\zeta^{k-j-1}}{A(\zeta)B(\zeta)} d\zeta.
	\]
Since $ |a_k|\le 1$, Lemma~\ref{co:basic estimate} yields the desired estimate for $ \| S \| $. A similar argument estimates $ \| R \| $.	
\end{proof}

It is worth pointing out that simple examples show that even though $\|A\|$ and $\|B\|$ are bounded by $1$, their roots can be arbitrarily large.

Before proceeding with the next two sections, it is worth reminding the reader of our standing assumption that the constants $C_1, C_2, \dots$ that will appear in the estimates below always depend {\em only on $N$ and $K$} and {\em not on the coefficients of} $A \in \C_{N}[z]$ and $B \in \C_{K}[z]$.

%%%%%%%%%%%%%
\section{Intermezzo---the Sylvester matrix}\label{se:Sylvester}

In this section, we show how Lemma~\ref{co:basic estimate} can be used  to obtain an estimate of the norm of the inverse of the Sylvester matrix. 
To see this,  consider a general polynomial 
$$P(z)=p_0+p_1z+\dots+p_{N+K-1}z^{N+K-1}$$ and the B\'{e}zout type equation 
\begin{equation}\label{eq:AR+BS=P}
A(z)R(z)+B(z)S(z)=P(z).
\end{equation}
Suppose that, as above, $A$ and $B$ have no common roots. Using the notation in Section~\ref{se:prelim}, the identity in \eqref{eq:AR+BS=P} translates into the system of $N+K$ equations
\begin{equation}\label{eq:system for coefficients}
\mathfrak{S}(A,B) \bf x=\bf p,
\end{equation}
where $\mathfrak{S}(A,B)$ is the $(N + K) \times (N + K)$ {\em Sylvester matrix}
\begin{equation}\label{Syldslfgdf}
\mathfrak{S}(A,B)=
\left[
\begin{array}{rrrrrrrr}
a_0&&&&b_0&&&\\
a_1&a_0&&&b_1&b_0&&\\
a_2&a_1&\ddots&&b_2&b_1&\ddots&\\
\vdots&&\ddots&a_0&\vdots& &\ddots&b_0\\
&\vdots& &a_1& &\vdots& &b_1\\
a_N&&&&b_K&&&\\
&a_N&&\vdots&&b_K&&\vdots\\
&&\ddots&&&&\ddots&\\
\undermat{K \text{ columns}}{&&&a_N}&\undermat{N\text{ columns}}{&&&b_K}
\end{array}\right],
\end{equation}

\bigskip\bigskip
\noindent 
$$\mathbf{x} = [r_0, r_1, \cdots, r_{K - 1}, s_{0}, s_1, \cdots, s_{N - 1} ]^{T},$$
and 
$$\mathbf{p} = [p_0, p_1,  \ldots, p_{N+K-1}]^{T}.$$

\begin{Theorem}\label{th:estimates on Sylvester} 
Let $A$ and $B$ are two polynomials of respective degree $N$ and $K$, with $ \| A \|, \| B \|\le 1 $, $ \delta=\delta(A,B) $ is given by~\eqref{eq:min values A B}, and let 
$$M=\max(\|A\|/|a_N|,\|B\|/|b_K|).$$ There is a constant $ C_3>0$ (depending only on $N$ and $K$) such that if $\mathfrak{S}= \mathfrak{S}(A,B) $ denotes the Sylvester matrix of $ A $ and $ B $, then
	\begin{equation}\label{eq:estimates Sylvester }
	\| \mathfrak{S}^{-1} \|\le \frac{C_3 M^{N+K+\max(N,K)-1}}{\delta^2}. 	
	\end{equation}
\end{Theorem}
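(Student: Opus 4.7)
The plan is to realize each column of $\mathfrak{S}^{-1}$ as the coefficient vector of a Bézout-type solution with a monomial right-hand side, and then bound these coefficients entry-by-entry using the contour-integral representation in \eqref{eq:solutions for arbitrary monomial} together with Lemma \ref{co:basic estimate}. As a first step, I would combine Lemma \ref{le:sylvester} with Lemma \ref{le:reduction}, noting that both $\delta$ and $M$ vary continuously on the set $\mathcal{T}$ of pairs without common roots, to reduce to the case in which $A$ and $B$ have only simple roots; this simplicity is required for Corollary \ref{co:Bezout extended} to apply.

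Under this reduction, for each $0 \le j \le N+K-1$ let $(R^{(j)}, S^{(j)})$ denote the unique minimal solution of $A R^{(j)} + B S^{(j)} = z^j$ given by Corollary \ref{co:Bezout extended}. The system \eqref{eq:system for coefficients} then identifies the concatenated vector $[r_0^{(j)}, \ldots, r_{K-1}^{(j)}, s_0^{(j)}, \ldots, s_{N-1}^{(j)}]^T$ with the $j$-th column of $\mathfrak{S}^{-1}$, so that bounding $\|\mathfrak{S}^{-1}\|$ (in any matrix norm, since all are equivalent up to dimensional constants depending only on $N+K$) reduces to bounding the individual entries $|r_i^{(j)}|$ and $|s_i^{(j)}|$ uniformly in $i$ and $j$.

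To invoke Lemma \ref{co:basic estimate} I need a uniform bound on the moduli of the roots of $A$ and $B$. Cauchy's classical root estimate gives $|\alpha_i| \le 1 + \|A\|/|a_N|$ and $|\beta_j| \le 1 + \|B\|/|b_K|$, and since $|a_N| \le \|A\|$ and $|b_K| \le \|B\|$ force $M \ge 1$, all roots lie in the disk of radius $2M$. Plugging $t = j$ into \eqref{eq:solutions for arbitrary monomial}, using $|a_k|, |b_k| \le 1$, and applying Lemma \ref{co:basic estimate} with its parameter taken to be $2M$ gives, for constants $C_1, C_2$ depending only on $N$ and $K$,
\[
|s_i^{(j)}| \le \sum_{k=i+1}^{N} \frac{C_1 (2M)^{j+k-i}}{\delta^2}
\quad \text{and} \quad
|r_i^{(j)}| \le \sum_{k=i+1}^{K} \frac{C_1 (2M)^{j+k-i}}{\delta^2}.
\]
Maximizing the exponent $j+k-i$ over the allowed ranges yields $2N+K-1$ (attained at $j = N+K-1$, $k=N$, $i=0$) for $s_i^{(j)}$ and $N+2K-1$ for $r_i^{(j)}$; their common upper bound is exactly $N+K+\max(N,K)-1$. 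Absorbing the factor $2^{N+K+\max(N,K)-1}$ into the constant yields the claimed bound.

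The proof is largely a bookkeeping exercise once the integral formulas and the separation-based estimate of Lemma \ref{co:basic estimate} are in hand. The main point of care is the exponent on $M$: one must verify $M \ge 1$ in order to replace the Cauchy root bound $1 + M$ by $2M$ cleanly, and one must separately maximize the exponent in the two integral families (for $s$ and for $r$) before taking their max to arrive at the precise value $N+K+\max(N,K)-1$.
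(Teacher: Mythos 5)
Your proposal is correct and follows essentially the same route as the paper: reduce to simple roots via Lemmas \ref{le:sylvester} and \ref{le:reduction}, identify the columns of $\mathfrak{S}^{-1}$ with the coefficient vectors of the solutions of $AR+BS=z^{\ell}$, bound the roots by $2M$ via Cauchy's estimate (using $M\ge 1$), and apply \eqref{eq:solutions for arbitrary monomial} together with Lemma \ref{co:basic estimate}, maximizing the exponent to get $N+K+\max(N,K)-1$. Your explicit check that $M\ge 1$ and the separate maximization for the $r$- and $s$-entries match the paper's computation, so nothing further is needed.
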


\begin{proof}
By Lemma \ref{le:sylvester} and Lemma \ref{le:reduction} we can assume that the roots of $A$ and $B$ are simple. 	
	
	From a classical result of Cauchy (for instance, see \cite[p.~167]{MR1894714}) it follows that the roots of $ A $ (denoted by $\{\alpha_i\}_{i = 1}^{N}$) and $ B $ (denoted by $\{\beta_j\}_{j = 1}^{K}$) satisfy the inequality
	\begin{equation}\label{eq:cauchy roots}
	|\alpha_i|, |\beta_j|\le 1+M\leq 2M.
	\end{equation}	
	
	For each $0 \leq \ell \leq N+K-1$, let $R^{(\ell)}, S^{(\ell)}$ denote the solution of~\eqref{eq:system for coefficients} corresponding to $P(z)=z^\ell$. If $\mathfrak{R}$ denotes the $(N+K)\times (N+K)$ matrix with columns
	\[
	[r^{(\ell)}_0, \dots, r^{(\ell)}_{K-1}, s^{(\ell)}_0, \dots, s^{(\ell)}_{N-1} ]^T,\quad 0 \leq \ell \leq  N+K-1,
	\]
	it follows from~\eqref{eq:system for coefficients} that $ \mathfrak{S}\mathfrak{R}=I_{N+K} $, and therefore $ \mathfrak{R}= \mathfrak{S}^{-1} $.
	
	To prove the theorem, we need to estimate the coefficients $ r^{(\ell)}_i $ and $ s^{(\ell)}_j$, for $0 \leq \ell \leq N+K-1$, $0\leq i\leq N-1$ and $0\leq j\leq K-1$. If we fix $0\leq \ell\leq N+K-1 $, applying the formulas from \eqref{eq:solutions for arbitrary monomial}, we have 
\[
 s^{(\ell)}_j=\sum_{k=j+1}^N a_k \frac{1}{2i\pi}\int_{\Gamma_1}\frac{\zeta^{\ell+k-j-1}}{A(\zeta)B(\zeta)}\,d\zeta.
\]	
It follows from  Lemma~\ref{co:basic estimate} that
\[
| s^{(\ell)}_j|\leq \sum_{k=j+1}^N |a_k|\frac{C_1}{\delta^2}(2M)^{\ell+k-j}\leq  \frac{C_1}{\delta^2} \sum_{k=j+1}^N (2M)^{\ell+k-j}\leq \frac{NC_1}{\delta^2} (2M)^{\ell+N-j}. 
\]
Now use the fact that $\ell+N-j\leq N+K+\max(N,K)-1$ and $M\geq 1$, which gives that 
\[
| s^{(\ell)}_j|\leq \frac{C_3 M^{N+K+\max(N+K)-1}}{\delta^2}.
\]

Similar estimates hold for $| r^{(\ell)}_i|$, and we thus get \eqref{eq:estimates Sylvester }. 	 
	 Notice the use of the fact that $\|\mathfrak{S}^{-1}\|$ is equivalent to the maximum of its entries $|r_{i}^{(\ell)}|$ and $|s_{j}^{(\ell)}|$ since all norms on a finite dimensional Banach space are equivalent.
\end{proof}

\begin{Remark}
If one removes the hypothesis of $\|A\|, \|B\| \leq 1$, one can make small adjustments to the proof of Theorem \ref{th:estimates on Sylvester} to obtain the estimate
$$\| \mathfrak{S}^{-1} \|\le \frac{C_3 M^{N+K+\max(N,K)-1} \max(\|A\|, \|B\|)}{\delta^2}. $$	
\end{Remark}

\begin{Remark}
It is natural for the estimate of $\|\mathfrak{S}^{-1}\|$ to depend on the leading coefficients $|a_N|$ and $|b_K|$. To see this, take $A(z)=az, B(z)=az+1$. Then $\delta = 1$, and 
\[\mathfrak{S} = \begin{bmatrix} 0 & 1\\ a & a \end{bmatrix}, \quad
\mathfrak{S}^{-1}=\begin{bmatrix} -1 & a^{-1}\\ \phantom{-}1 & 0\end{bmatrix}.
\]
Thus, $\big\|\mathfrak{S}^{-1} \big\| $
  becomes unbounded as $a \to 0$. Note that this example also shows that for $t>0$ one cannot expect bounds for $\widehat R$ and $\widehat S$ from \eqref{eq:bezout with arbitrary monomial} in terms only of $\delta$.
 
\end{Remark}

\section{Finding convenient contours}\label{se:contours}

We return now to our main purpose, that of 
proving Theorem~\ref{th:main}. To obtain better estimates than those given in Section~\ref{se:weak form},  we need to use Corollary~\ref{co:Bezout extended} more delicately.
  The most important step is an appropriate choice of the contours $\Gamma_1$ and $\Gamma_2$. This is the goal of this section. We will only discuss the construction for $\Gamma_1$ since the construction of $\Gamma_2$ is analogous.

	%As in the proof of Theorem \ref{co:main theorem weak}, we can assume  that both $A$ and $B$ have simple roots. 
	For sufficiently small 
$\eps$, depending on $N$ and $K$, there are $N+K+1$ disjoint closed disks of radius $2\eps$ inside $\D$. It follows that at least one of these disks does not contain any of the roots of $A$ or $B$. Indeed, suppose it is 
$\overline{D(z_0, 2\eps)}$ for a suitable $z_0\in \D$.

Now replace $A(z)$ and $B(z)$ by $A(z+z_0)$ and $B(z+z_0)$ and notice that 
$$\delta(A, B) = \delta(A(\cdot + z_0), B(\cdot + z_0)),$$ while
\begin{equation}\label{eq:after translation}
\| A(\cdot+z_0) \|, \| B(\cdot +z_0) \|\le C_4,
\end{equation}
where $C_4$ depends only on $N,K$. From now on, we will assume that $\overline{D(0, 2\eps)}$ does not contain any of the zeros of $A$ or $B$.

Also observe that the equivalence of norms on finite dimensional Banach spaces (with constants depending only
on the dimension), applied to $\C_{N}[z]$, yields
\[
 \|A\|_{\eps}:= \sup_{|z|\le \eps}|A(z)|\ge C_5 \|A\|
\]
for some $C_5>0$. By the maximum principle, one can choose a $z_1\in  \partial D(0,\eps)$ such that 
$$|A(z_1)|= \sup_{|z|\le \epsilon}|A(z)|.$$

If $z \in \C$ satisfies  $|z-\alpha_i|\ge\tfrac{1}{2} |\alpha_i-z_1|$ for every $1 \leq i \leq N$, then the previous inequality shows that 
\begin{equation} \label{estimA1}
	|A(z)| =\Big|a_N\prod_{i=1}^N(z-\alpha_i)\Big|
	\ge \Big|a_N\prod_{i=1}^N \tfrac{1}{2}(z_1-\alpha_i)\Big|
	 =|A(z_1)| 2^{-N}
	 \ge C_5\|A\| 2^{-N}.
\end{equation}
Since $|z_1|=  \eps \le \tfrac{1}{2}|\alpha_i|$, we have $\tfrac{1}{2}|\alpha_i-z_1|\le \tfrac{1}{2}(|\alpha_i|+\eps)\le \frac{3}{4}|\alpha_i|$. 

Let 
$$D_i :=D\big(\alpha_i,\tfrac{3}{4}|\alpha_i|\big) \; \;  \mbox{and} \; \; 
 D_A :=\bigcup_{i = 1}^{N} D_i.$$  Each $z\not\in D_A$ satisfies 
 $$|z-\alpha_i|\ge  \tfrac{1}{2}|\alpha_i-z_1| \; \mbox{for every $1 \leq i \leq N$}.$$ From \eqref{estimA1} it follows that $L(A,C_5\|A\|2^{-N})\subset D_A$ (Recall the definition of the sub-level set $L(A, \cdot)$ from \eqref{LLLLLL}).

Since $|\alpha_i|>2\eps$ for all $1 \leq i \leq N$, the triangle inequality says that if $ z\in D_i $, then $ |z|\ge \tfrac{1}{4}|\alpha_i| \geq\tfrac{1}{2}\eps $. Thus
\begin{equation}\label{eq:z>eps/2}
	|z|\ge \frac{\eps}{2} \text{ for all } z\in \overline{D_{A}} .
\end{equation}

Now define the sets 
\[
F_A^{(i)}:= E_A\cap D_i \quad \mbox{and} \quad 
 F_A:=E_A\cap D_A= \bigcup_{i=1}^N F_A^{(i)}.
\]
In light of \eqref{eq:the intersections}, $ F_A^{(i)} $ is a union of at most $ N^{K} $ intersections of disks. Each of these intersections is contained in $ D_i $ and thus Lemma~\ref{le:convex} says that the length of its boundary is bounded above  by $ \operatorname{len}(\partial D_i)=2\pi \frac{3}{4}|\alpha_i| $. Therefore,
\begin{equation}\label{eq:length of }
\operatorname{len}(\partial F_A^{(i)})\le \frac{3\pi N^K}{2}|\alpha_{i}| \; \; \mbox{for all $1 \leq i \leq N$}.
\end{equation}

We define the new system of contours $\Gamma_1$ by
\[
\Gamma_{1} :=\partial  (D_A\cap E_A).
\]
Since $ \alpha_i\in D_i $, and we already know  that $ \alpha_i\in E_A $ (see Remark~\ref{re:form of E_A}), it follows that $ \alpha_i\in D_A\cap E_A $ for all $ i $. On the other hand, $ \beta_j\notin \overline{E_A}$ (also by Remark~\ref{re:form of E_A}) and so $ \beta_j\notin\overline{D_A\cap E_A} $ for all $ j $. Therefore, we may orient $ \Gamma_1 $ such that its index is $ 1 $ for any $ \alpha_i $ and $ 0$ for any $ \beta_j $
(An example appears in Figures \ref{Fig_EADA} and   \ref{Fig_Gamma1}).
\begin{figure}
\begin{center}
 \includegraphics[width=.7\textwidth]{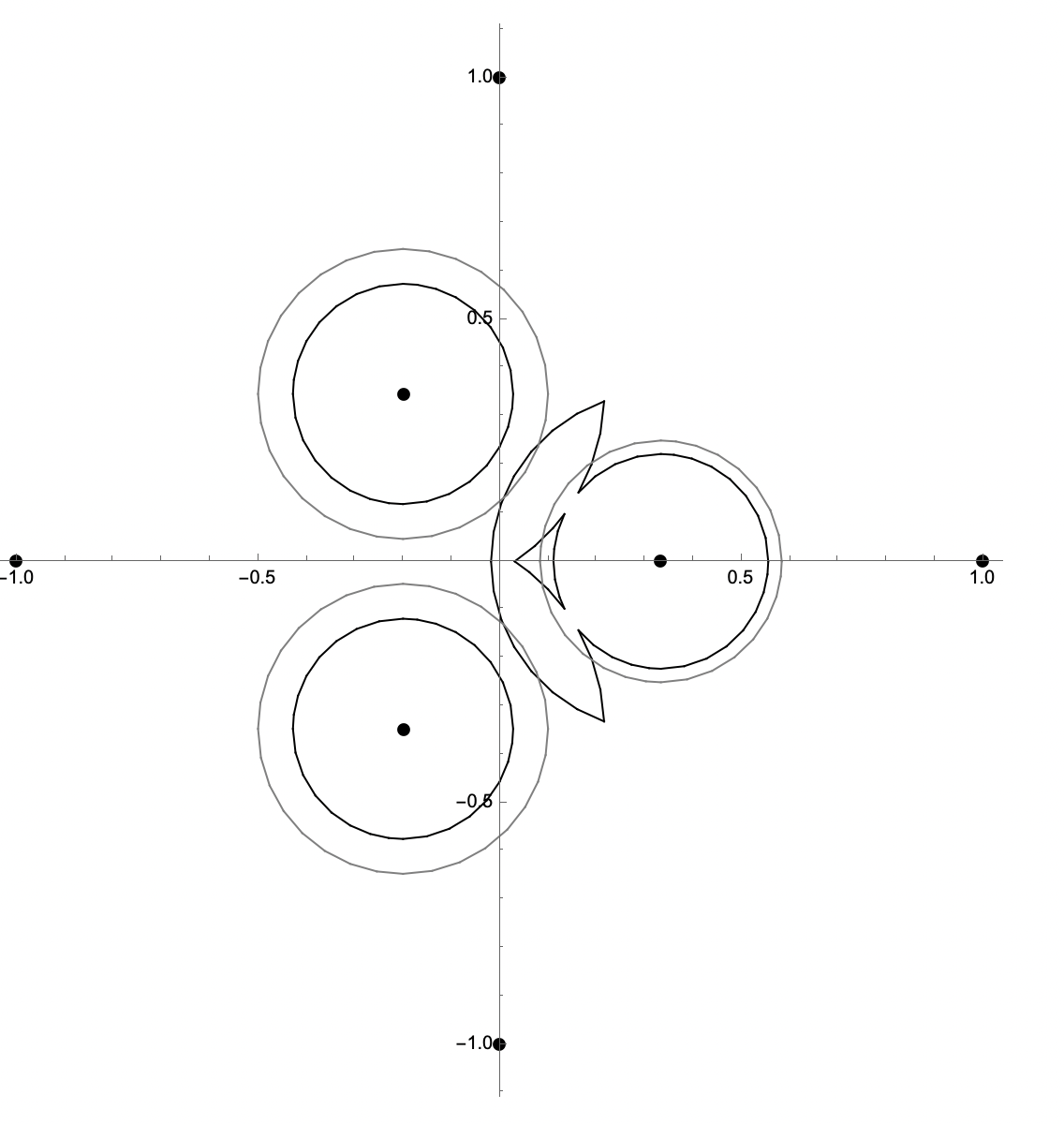}
 \caption{The regions $E_A$ (black) and $D_A$ (gray) with the zeros  $\{\alpha_i\}_{i = 1}^{3}$ of $A$ and the zeros $\{\beta_j\}_{j = 1}^{4}$ of $B$. Here $(\alpha_1, \alpha_2, \alpha_3) = (\tfrac{1}{3}, -0.2+0.34641 i, -0.2-0.34641 i)$ and $(\beta_1, \beta_2, \beta_3, \beta_4) = (1, i, -1, -i)$.}
 \label{Fig_EADA}
 \end{center}
\end{figure}
\begin{figure}
\begin{center}
 \includegraphics[width=.7\textwidth]{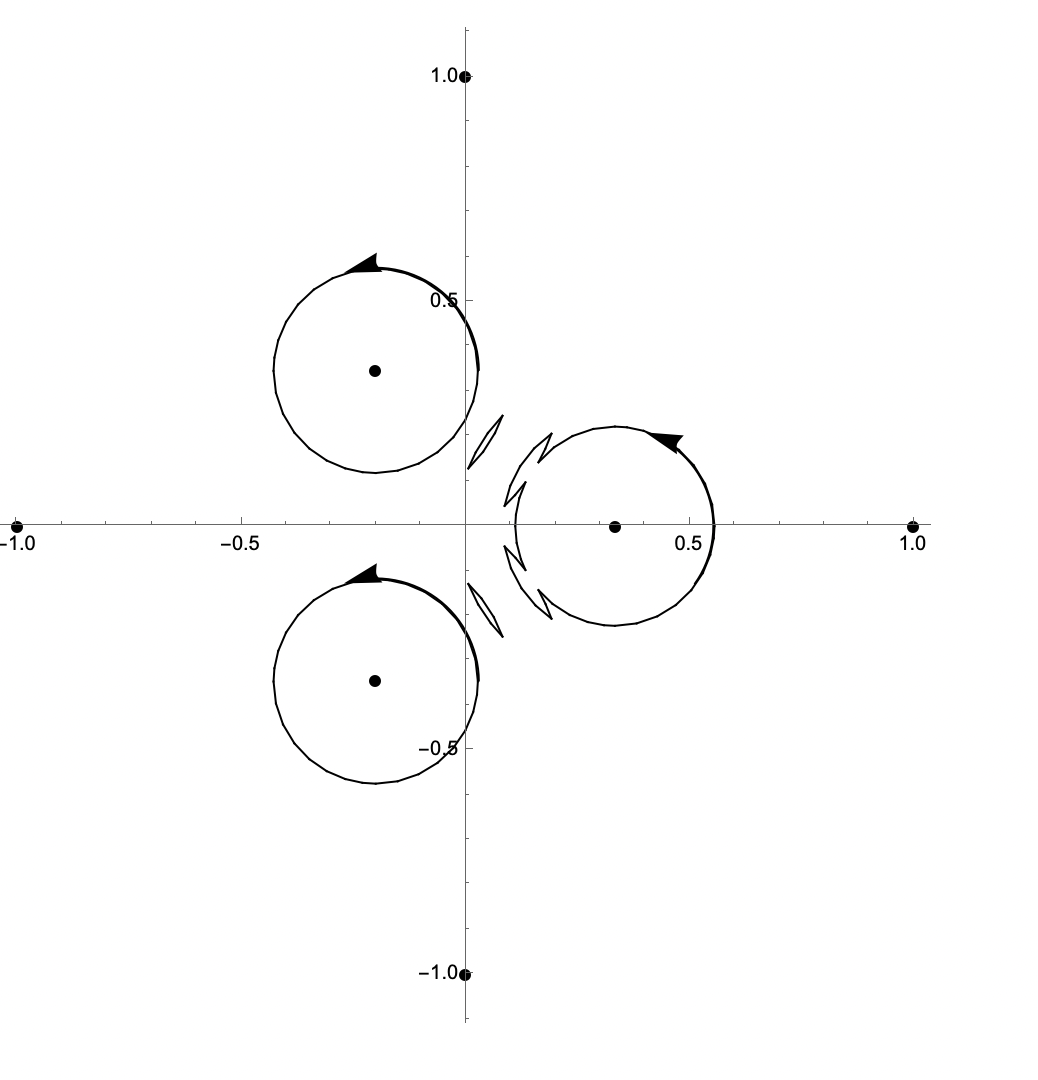}
 \caption{The system of curves $\Gamma_1 = \partial (E_{A} \cap D_A)$ for the example in Figure~\ref{Fig_EADA}, with the zeros $\{\alpha_i\}_{i = 1}^{3}$ of $A$ inside $\Gamma_1$ and the zeros $\{\beta_j\}_{j = 1}^{4}$ of $B$ outside $\Gamma_1$. Recall that $(\alpha_1, \alpha_2, \alpha_3) = (\tfrac{1}{3}, -0.2+0.34641 i, -0.2-0.34641 i)$ and $(\beta_1, \beta_2, \beta_3, \beta_4) = (1, i, -1, -i)$.}
 \label{Fig_Gamma1}
 \end{center}
\end{figure}

Recalling the proof of Theorem~\ref{co:main theorem weak}, it would be natural to estimate the length of $\Gamma_1$ as well as the values of $A$ and $B$ on $\Gamma_1$. However, one can see that the length of $ \Gamma_1 $  depends on the absolute values of the roots $ \alpha_i $ and $ \beta_j $ (which can be very large). The method we will use in Section~\ref{se:main} circumvents this problem by using another system of  contours $ \widetilde{\Gamma}_1 $ defined by
\[
\widetilde{\Gamma}_1:=\Big\{\frac{1}{z}: z\in \Gamma_1\Big\}.
\]
This is well defined, since $ 0\notin\Gamma_1 $. As inversion is a homeomorphism, $ \widetilde{\Gamma}_1 $ is the boundary of 
$$ \Big\{\frac{1}{z}: z\in D_A\cap E_A \Big\} ,$$ which is also a finite union of circular arcs.
Moreover, as we will use this in Section~\ref{se:main}, we may orient $\widetilde{\Gamma}_1$ such that its index with respect to $1/\alpha_i$ is 1 for all $1\le i\le N$, while the index with respect to $1/\beta_j$ is 0 for all $1\le j\le K$.

This next result contains the basic estimates that will be used in the proof of Theorem~\ref{th:main}.

\begin{Lemma}\label{le:estimates on Gamma1}
	With the notation above, the following hold: 
	\begin{itemize}
		\item[(i)] There is a constant $ C_6>0$, depending only on $N$ and $K$, such that  $${\displaystyle \int_{\Gamma_{1}}\frac{|du|}{|u|}\le C_6}.$$

		\item [(ii)] For all $z\in \Gamma_1$, we have that 
		$$|B(z)|\ge \delta/3^{K}\; \;  \mbox{and} \; \;  
		 |A(z)|\ge m:= \min\{C_5\| A \|/2^{N}, \delta /3^{N} \}.$$

	\end{itemize}
\end{Lemma}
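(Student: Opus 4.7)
The plan is to handle (i) and (ii) separately, both relying on the decomposition $D_A\cap E_A=\bigcup_{i=1}^N F_A^{(i)}$ with $F_A^{(i)}=D_i\cap E_A$, and on two facts from the text: the perimeter bound $\operatorname{len}(\partial F_A^{(i)})\le \tfrac{3\pi N^K}{2}|\alpha_i|$ from~\eqref{eq:length of }, and the lower bound $|u|\ge |\alpha_i|/4$ for every $u\in\overline{D_i}$ (a reverse triangle inequality inside $D_i$).

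For~(i), I would first observe that $\Gamma_1=\partial\bigl(\bigcup_i F_A^{(i)}\bigr)\subset \bigcup_i \partial F_A^{(i)}$, so the integral splits as a sum over $i$. On $\partial F_A^{(i)}\subset \overline{D_i}$ the integrand $1/|u|$ is at most $4/|\alpha_i|$, and combined with the length bound this gives a contribution of at most $6\pi N^K$ per index $i$, independent of $|\alpha_i|$. Summing over $i$ yields the desired constant, for example $C_6=6\pi N^{K+1}$.

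For~(ii), I would use the general set-theoretic inclusion $\partial(X\cap Y)\subset \partial X\cup \partial Y$ to write $\Gamma_1\subset \partial D_A\cup \partial E_A$, together with $\Gamma_1\subset \overline{E_A}$. The bound $|B(z)|\ge \delta/3^K$ on $\Gamma_1$ then follows from $L(B,\delta/3^K)\cap E_A=\varnothing$ (see~\eqref{eq:L and E}) and continuity of $|B|$. For $|A(z)|\ge m$ I would split into cases: on $\Gamma_1\cap\partial E_A$, Remark~\ref{re:form of E_A} gives $|A(z)|\ge \delta/3^N$; on $\Gamma_1\cap\partial D_A$, the inclusion $L(A,C_5\|A\|/2^N)\subset D_A$ established during the construction of $D_A$ yields, by complementation and continuity, $|A(z)|\ge C_5\|A\|/2^N$; and $m$ is exactly the minimum of these two quantities. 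The main obstacle is the scale-dependence in~(i): the roots $\alpha_i$ may be arbitrarily large, so the raw length of $\Gamma_1$ is not controlled, which was precisely what foiled the approach in Section~\ref{se:weak form}. The scale-invariance of the measure $|du|/|u|$ combined with the choice of radii $\tfrac{3}{4}|\alpha_i|$ in the definition of $D_i$ is designed so that the perimeter scales like $|\alpha_i|$ while the integrand scales like $1/|\alpha_i|$, producing a clean cancellation.
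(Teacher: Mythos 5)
Your proposal is correct and follows essentially the same route as the paper: for (i) the same decomposition $\Gamma_1\subset\bigcup_i\partial F_A^{(i)}$ with the length bound~\eqref{eq:length of } and the estimate $|u|\ge\tfrac14|\alpha_i|$, giving $C_6=6\pi N^{K+1}$; for (ii) the same case split $\Gamma_1\subset\partial D_A\cup\partial E_A$ together with $\Gamma_1\subset\overline{E_A}$, using $L(A,C_5\|A\|2^{-N})\subset D_A$, Remark~\ref{re:form of E_A}, and the disjointness of $E_A$ from the sub-level set of $B$. No gaps; the argument matches the paper's proof.
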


\begin{proof}

(i):
For each $1 \leq i \leq N$ let 
\[
\Gamma_{1,i}=\partial F_A^{(i)}.
\]
Clearly we have $ \Gamma_1\subset \bigcup_{i=1}^N \Gamma_{1,i} $.
If $ u\in \Gamma_{1, i} $, then $|u-\alpha_i|\le \tfrac{3}{4}|\alpha_i|$ and therefore,
\begin{equation}\label{eq:|u|>eps on Gamma}
|u|\ge |\alpha_i|-|u-\alpha_i|\ge \tfrac{1}{4}|\alpha_i|.
\end{equation} 
Hence, by~\eqref{eq:length of }, we have 
\[
\int_{\Gamma_{1,i}}\frac{|du|}{|u|}\le 
\frac{ \frac{3\pi N^K}{2}|\alpha_{i}|}{\tfrac{1}{4}|\alpha_i|}
= 6\pi N^K
\]
and thus,
\begin{equation}\label{sdfoooOooooOOO}
\int_{\Gamma_{1}}\frac{|du|}{|u|} \le \sum_{i=1}^N \int_{\Gamma_{1,i}}\frac{|du|}{|u|}
\le  6\pi N^{K+1} =: C_6.
\end{equation}

(ii): If $z\in \Gamma_{1}$, then either $z\in \partial D_A$, in which case~\eqref{estimA1} yields  $|A(z)|\ge C_5 \|A\| /2^{N}$, or $z\in \partial E_A$, in which case $|A(z)|\ge \delta /3^{N}$. 
Since $\Gamma_1\subset \overline{E_A}$,
and $E_A$ is disjoint from $E_B$, we see that $|B|\ge \delta/ 3^{K}$ on~$ \Gamma_1 $.
\end{proof}

Note that the constant $m$, defined in (ii), is not a constant depending only on $N$ and $K$, since it also depends on $\| A \|$.

\section{Proof of the main result}\label{se:main}

As noted above, we have no control of the size of the zeros of $ A $ and $ B $. However, the assumptions made in Section~\ref{se:contours} imply that their inverses are bounded: $ |\alpha_i|, |\beta_j|\ge 2\eps $ implies that 
$$\frac{1}{|\alpha_i|},\frac{1}{|\beta_j|} \le \frac{1}{2\eps}.$$ This is the basis of the proof that follows.

\begin{proof}[Proof of Theorem~\ref{th:main}]
Once again, by Lemma \ref{le:sylvester} and Lemma \ref{le:reduction} we can assume that the roots of $A$ and $B$ are simple. 
Define 
$$\widetilde{A}(z)=
	z^NA(1/z) \;  \; \mbox{and} \; \;  \widetilde{B}(z)=z^KB(1/z).$$ Notice that $\|\widetilde{A}\| = \|A\|$ and $\|\widetilde{B}\| = \|B\|$ since the  coefficients of $\widetilde{A}$ are the reverse of those of $A$ (and similarly for $\widetilde{B}$).
Moreover, the roots of $\widetilde{A}$ are $\widetilde\alpha_i:=1/\alpha_i$ while the roots of $\widetilde{B}$ are $\widetilde\beta_j:=1/\beta_j$. 

%Since $|\alpha_i|\ge \eps$ and $|\beta_j|\ge \eps$, the new roots are contained in $D(0, 1/\eps)$.
%	Also,
%	\begin{equation}\label{eq:lower estimate for tilde A,B}
%		|\tilde{A}(\tilde{\beta}_j)|=|\beta_j^NA(1/(1/\beta_j))|=|\beta_j^N A(\beta_j)|\ge \eps^N\delta,
%	\end{equation}
%	and similarly for $\tilde{B}$.  
%	
	Applying Corollary~\ref{co:Bezout extended} when $P(z)=z^{N+K-1}$, one produces polynomials $\widetilde{R}\in\C_{K - 1}[z]$
	and $\widetilde{S}\in\C_{N - 1}[z]$ which satisfy the 
	modified B\'ezout equation 
	\begin{equation}\label{eq:formula with tilde}
		\widetilde A(z)\widetilde{R}(z)+\widetilde B(z)\widetilde{S}(z)=z^{N+K-1},
	\end{equation} 
	and, according to to~\eqref{eq:solutions for arbitrary monomial}, applied to the case $ t=N+K-1 $, they are given by the formulas
\begin{equation}\label{eq:sum form for Sp, Rp - 1}
	\begin{split}
			\widetilde S(z)&=\sum_{j=0}^{N-1}z^j\sum_{k=j+1}^N \widetilde{a}_k\frac{1}{2\pi i}\int_{\widetilde{\Gamma}_1}\frac{\zeta^{N+K+k-j-2}}
		{\widetilde{A}(\zeta)\widetilde{B}(\zeta)}\,d\zeta,\\
			\widetilde R(z)&=\sum_{j=0}^{K-1}z^j\sum_{k=j+1}^K \widetilde{b}_k\frac{1}{2\pi i}\int_{\widetilde{\Gamma}_2}\frac{\zeta^{N+K+k-j-2}}
		{\widetilde{A}(\zeta)\widetilde{B}(\zeta)}\,d\zeta.
	\end{split}
\end{equation}

Let us estimate the integrals above. We have 
\[
\int_{\widetilde{\Gamma}_1}\frac{\zeta^{N+K+k-j-2}}
{\widetilde{A}(\zeta)\widetilde{B}(\zeta)}\,d\zeta=
\int_{\widetilde{\Gamma}_1}\frac{\zeta^{k-j-2}}
{{A}(1/\zeta){B}(1/\zeta)}\,d\zeta
=\int_{\Gamma_1}\frac{1}{u^{k-j}A(u)B(u)}\,du,
\]
where for the last equality we have made the change of variable $u=1/\zeta$.

Applying Lemma~\ref{le:estimates on Gamma1} (ii), it follows that
\[
\Big|\widetilde{a}_k\int_{\widetilde{\Gamma}_{1}}\frac{\zeta^{N+K+k-j-2}}
{\widetilde{A}(\zeta)\widetilde{B}(\zeta)}\,d\zeta\Big|
\le \frac{|\widetilde{a}_k|}{m\delta/3^K}\int_{\Gamma_1}\frac{|du|}{|u|^{k-j}}\le \frac{3^K \|A\|}{m\delta}\int_{\Gamma_1}\frac{|du|}{|u|^{k-j}}.
\]

 Since $\Gamma_1\subset \overline{D_{A}}$,~\eqref{eq:z>eps/2} implies that $|u|\ge \varepsilon/2$ for all $u\in \Gamma_1$. Therefore applying
 Lemma~\ref{le:estimates on Gamma1}  and noting that $k-j-1\ge 0$,  yields
\begin{align*}
\int_{\Gamma_1}\frac{|du|}{|u|^{k-j}}& =\int_{\Gamma_1}\frac{|du|}{|u||u|^{k-j-1}}\le \left(\frac{2}{\varepsilon}\right)^{k-j-1}\int_{\Gamma_1}\frac{|du|}{|u|}\\
&\le C_6\left(\frac{2}{\varepsilon}\right)^{\max(N,K)-1}=:C_7.
\end{align*}
Thus,
\[
\Big|\widetilde{a}_k\int_{\widetilde{\Gamma}_{1}}\frac{\zeta^{N+K+k-j-2}}
{\widetilde{A}(\zeta)\widetilde{B}(\zeta)}\,d\zeta\Big| \le C_7 \frac{3^K \|A\|}{m\delta}
\]
and so
\[
|s_j|\le \frac{NC_7 3^K\| A \|}{m\delta} \; \; \mbox{for all $0 \leq j \leq K-1$.}
\]

Recall that $$m=\min\{C_5\| A \|/2^N, \delta/3^N \}.$$  When $m=C_5\| A \|/2^N$, it follows using \eqref{eq:boundedness of delta} that 
\[
|s_j|\le \frac{NC_73^K 2^N}{C_5\delta}
\le \frac{NC_73^K 2^N T}{C_5\delta^2}.
\]
When $m=\delta/3^N$, taking into account that, by \eqref{eq:after translation}, $\| A \|\le C_4$ , we obtain
\[
|s_j|\le \frac{N C_7 C_4 3^{N+K}}{\delta^2}.
\]
Thus, in  all cases 
\begin{equation}\label{eq:most of the terms}
	 |s_j|\le \frac{C_8}{\delta^2}\; \;  \mbox{for all $1 \leq j \leq  N-1$}
\end{equation} 
 for some constant $C_8>0$.

The estimates are similar for the integral on $\Gamma_2$, and so all coefficients of $ \widetilde{R} $ and $ \widetilde{S} $ are  bounded by $C_{9}\delta^{-2}$, where $C_{9}$ depends only on $N$ and $K$.

  Now observe that \eqref{eq:formula with tilde} can be written as 
\[
 z^NA(1/z)\widetilde{R}(z)+z^KB(1/z)\widetilde{S}(z)=z^{N+K-1}.
\]
Dividing by $z^{N+K-1}$ we obtain
\[
 A(1/z)\widetilde{R}(z)/z^{K-1}+B(1/z)\widetilde{S}(z)/z^{N-1}=1.
\]
Replacing $z$ with $1/z$ yields
\[
 A(z)z^{K-1}\widetilde{R}(1/z)+B(z)z^{N-1}\widetilde{S}(1/z)=1.
\]
Setting $R(z)=z^{K-1}\widetilde{R}(1/z)$ and $S(z)=z^{N-1}\widetilde{S}(1/z)$ gives us
\[
 A(z)R(z)+B(z)S(z)=1.
\]
Thus $R$ and $S$ are the solutions of the desired B\'ezout equation.
Since the coefficients of $R$ and $S$ are those of $\widetilde{R}$ and $\widetilde{S}$ in reverse order, 
%so from~\eqref{eq:norm tilde RS} 
it follows that $\|  R \|, \| S \|\le C_{9}\delta^{-2}$. The proof of the theorem, at least when the roots of $A$ and $B$ are simple, is now finished by setting $C=C_{9}$. 

As observed earlier, the assumption that $ A $ and $ B $ have simple roots can be removed by
Lemma~\ref{le:sylvester}.  This completes the proof of Theorem~\ref{th:main}.
\end{proof}

\section{Examples} 
 In this section, we give three examples. The first one shows the optimality of our estimates. The second one discusses the necessity of the assumption that $\|A\|,\|B\|\leq 1$ in Theorem \ref{th:main}. The last one shows that the functions $\delta$ and $\widetilde\delta$ (from \eqref{eq:other delta} and \eqref{eq:tilde delta}) are not continuous.
 
\begin{Example}
For $N \in \N$ let $w=e^{2\pi i/(2N-1)}$ and $0 < a < 1$. A calculation shows that
\[
z^{2N-1}- a^{2N-1}=\prod_{j=1}^{2N-1} (z-a w^j),
\]
from which one derives the B\'{e}zout identity  $AR+BS=1$ with 
$$A(z)=z^N, \quad R(z)=\frac{1}{a^{2N-1}} z^{N-1},$$
$$B(z)=\prod_{j=1}^N  (z-a w^j), \quad \mbox{and} \quad S(z)=-\frac{1}{a^{2N-1}} \prod_{j=N+1}^{2N-1}  (z-a w^j).$$ Here
	$\delta=a^N$ and the unique nonzero coefficient of $R$ has modulus $\delta^{-2+1/N}$.
This shows that the exponent of $ \delta $ in an estimate for $ \|R\| $ and $ \| S \| $ must be at least 	$\delta^{-2+1/N}$. 
\end{Example} 

\subsection*{Open problem} Is 
$$C \delta^{-2+\frac{1}{\max(N,K)}}$$ the best possible estimate for $\| R \|$ and $\| S \|$? 

%The estimate $C \delta^{-2}$ is certainly the best estimate that is independent of $N$ and $K$. 

\begin{Example}
With $A$ and $B$ as in the previous example, define 
$$A_1=a^{-2}A \quad \mbox{and} \quad B_1=a^{-2}B.$$  Observe that  $\| A_1 \|, \| B_1 \|\to\infty$ when $a\to 0$. Then
$R_1=a^2R$, $S_1=a^2S$ (the solutions to B\'{e}zout's identity for $A_1$ and $B_1$), and
$$\delta(A_1, B_1)=a^{-2}\delta(A,B)=a^{N-2}.$$ The unique nonzero coefficient of $R_1$ is 
$$r^{(1)}_{N-1}=a^2a^{1-2N}=a^{-2N+3}.$$
Furthermore, since $0<a<1$, we have $\delta(A_1, B_1)<1$.
Then
\[
r^{(1)}_{N-1}\delta(A_1, B_1)^2= a^{-2N+3}a^{2N-4}= a^{-1}.
\]
The last quantity is not bounded independently of $a$. This shows that in Theorem~\ref{th:main}, without the condition that $\| A \|, \|B  \|\le 1$, there is no constant $C > 0$ such that
\[
\|R\|\le C\delta^{-2}.
\]
\end{Example}

\begin{Example}\label{example-DD}

Let $A(z)=z$, $B(z)=1-z$. Then $\delta(A,B)=1$ and $\widetilde\delta(A,B)= \tfrac{1}{2}$. Let 
$$A_n=z+\frac{1}{n}z^2 \quad \mbox{and} \quad B_n(z)=1-z-\left(\frac{1}{n}+\frac{1}{n^2}\right)z^2.$$
Observe that $\|A_n-A\|=\frac{1}{n}\to 0$ and $\|B_n-B\|=\frac{1}{n}+\frac{1}{n^2}\to 0$ as $n\to\infty$. 
 Clearly the zeros of $A_n$ are $0$ (which is  the zero
of $A$) and $-n$ (which goes to infinity). Then
\[
0 \le \widetilde{\delta}(A_n,B_n)\leq \delta(A_n,B_n) \leq |B_n(-n)|=0,
\]
which implies that $\delta(A_n,B_n)=\widetilde{\delta}(A_n,B_n)=0$. This shows that both $\delta$ and $\widetilde\delta$ are not continuous at $(A, B)$.
\end{Example}

 \section{Final remarks}\label{tennn}
 
% \subsection{Assumption on $\delta$} So far we have assumed that $\delta \le 1$ since in the proof of the main result one uses  $\delta\ge \delta^2$ to cover all cases. An alternate possibility would be to define, say, $\gamma=\min \{\delta, \delta^2\}$ and obtain estimates of the form $C/\gamma$. 

%\subsection{An extension}\label{sse:extension} In our main theorem, one can relax the assumption that $\|A\|, \|B\| \leq 1$ and prove a similar type of result. To see this, take arbitrary polynomials $ A$ and $B $ and apply Theorem~\ref{th:main} to the polynomials 
%$ A_1=A/\| A\|$ and $B_1=B/\| B \|$ to obtain 
%$$ \delta(A_1, B_1)\ge \frac{\delta(A,B)}{\max(\| A \|, \| B \|)},   \quad R_1=\|A\| R,  \quad \mbox{and} \quad S_1=\|B\|  S.$$Therefore,
%  \[
%  \|R\|\le \frac{C \max(\| A \|, \| B \|)^2}{\| A \|\delta(A,B)^2} \; \; \mbox{and} \; \;   \|S\|\le \frac{C \max(\| A \|, \| B \|)^2}{\| B \|\delta(A,B)^2}.
%  \]

\subsection{An extension}\label{sse:extension} In our main theorem, one can relax the assumption that $\|A\|, \|B\| \leq 1$ and prove a similar type of result but with bounds depending on the norm of $A$ and $B$. To see this, take arbitrary polynomials $ A$ and $B$ and set  $M=\max(\|A\|,\|B\|)$. Now apply Theorem~\ref{th:main} to the polynomials 
$ A_1=A/M$ and $B_1=B/M$ to obtain 
$$ \delta(A_1, B_1)= \frac{\delta(A,B)}{M},   \quad R_1=M R,  \quad \mbox{and} \quad S_1=M S.$$Therefore,
  \[
  \|R\|\le \frac{C \max(\| A \|, \| B \|)}{\delta(A,B)^2} \; \; \mbox{and} \; \;   \|S\|\le \frac{C \max(\| A \|, \| B \|)}{\delta(A,B)^2}.
  \]

\subsection{A related problem} The paper \cite{Trent07} considers the related problem of estimating the solutions of the corona problem in $H^\infty$ when the initial data are polynomials. However, our results are not directly comparable to those in \cite{Trent07}. First, the starting problem is not the same since the initial lower bound therein is equivalent to
\[
\delta'(A,B)= \inf_{z\in\D} \{|A(z)|+|B(z)|\}.
\]
Clearly $\widetilde{\delta}(A, B)\le \delta'(A,B)$. However, the example on~\cite[p.~422]{Trent07} shows that these quantities are not equivalent.
Secondly, the resulting solutions $R,S$ to~\eqref{Bezout_C} obtained in~\cite{Trent07} are {\em rational} functions  (not necessarily polynomials) and moreover, the estimates for $\| R \|$ and $\| S \|$ are of order
\[
\left(\frac{1}{\delta'(A,B)}\log\frac{1}{\delta'(A,B)} \right)^{2}.
\]
This is known to be a good, though not necessarily the best, estimate for the solutions of the corona problem in $H^\infty$.

\subsection{Varying the degrees of $A,B$}
As stated from the beginning, the various constants that appear in our results depend only on $N = \operatorname{deg} A$ and $K = \operatorname{deg} B$ (and not the coefficients of $A$ and $B$). If we let $N$ and $K$ increase, one can see that our methods produce constants that increase quite rapidly in $N$ and $K$ (see \eqref{sdfoooOooooOOO} for example). For possible numerical applications, it would be interesting to explore the optimal estimates of these constants as functions of $N$ and $K$.

\subsection{Generalizing to more than two polynomials} B\'ezout's polynomial identity from \eqref{Bezout_C} generalizes to more than two polynomials in that if $ A_1, \dots, A_q\in \C[z] $ have no common zeros, then there exist $ R_1, \dots, R_q\in \C[z] $ with $\deg R_i\le \max\{\deg A_j\}-1$ such that $ A_1R_1+\dots+A_qR_q=1 $ (see, for instance,~\cite[Section 1.3]{Yger}). However, our methods do not extend to provide estimates of $ \| R_i \| $ when $ q\ge 3 $. Note that when $ q\ge 3 $, the solutions $ R_i $ are no longer unique, even if we put restrictions on their degree.
 
\subsection*{Open problem} Suppose  that $ A_1, \dots, A_q\in \C[z] $ have no common zeros, and define 
\[
\delta=\min \Big\{ \sum_{i=1}^q |A_i(z)| : z\in\C, \prod_{i = 1}^q A_i(z)=0  \Big\}.
\]
Prove there exists a constant $ C > 0 $, depending only on $ \max\{\deg A_i\}$, such that one can find $ R_1, \dots, R_q\in\C[z] $ with $ A_1R_1+\dots+A_qR_q=1 $ and 
\[
\max_{0\le i\le q}\| R_i \|\le \frac{C}{\delta^2}.
\]

\section*{Acknowledgements}

We are indebted to Laurent Baratchart for pointing out the problem of estimating the norm of the inverse of the Sylvester matrix.

Emmanuel Fricain was supported the Labex CEMPI (ANR-11-LABX -0007-01). Andreas Hartmann was supported by the Project REPKA (ANR-18-CE40-0035). Dan Timotin was partially supported by a grant of the Ministry of Research, Innovation and Digitization, CNCS/CCCDI – UEFISCDI, Project Number PN-III-P4-ID-PCE-2020-0458, within PNCDI III,  and by the international research network ECO-Math.

\bibliographystyle{plain}

\bibliography{references}

\end{document}